\documentclass[11pt,a4paper,oneside]{amsart}
\usepackage{a4wide}
\usepackage[utf8]{inputenc}
\usepackage[english]{babel}

\usepackage{amsfonts,amsmath,amssymb,bbm}
\usepackage{aliascnt}
\usepackage{enumitem}
\setlist[enumerate]{label=\normalfont{(\roman*)}}
\usepackage{csquotes}
\usepackage{microtype}
\usepackage[all,cmtip]{xy}

\usepackage[pdfusetitle,hidelinks]{hyperref}
\usepackage{amsthm}

%%% Newcommands

\newcommand*{\mc}[1]{\mathcal{#1}}
\newcommand*{\opname}[1]{\operatorname{#1}}

\newcommand*{\GL}{\opname{GL}}
\newcommand*{\SL}{\opname{SL}}
\newcommand*{\PGL}{\opname{PGL}}
\renewcommand*{\phi}{\varphi}

\newcommand*{\NN}{\mathbbm{N}}
\newcommand*{\ZZ}{\mathbbm{Z}}
\newcommand*{\CC}{\mathbbm{C}}
\newcommand*{\QQ}{\mathbbm{Q}}
\newcommand*{\RR}{\mathbbm{R}}
\newcommand*{\EE}{\mathbbm{E}}
\newcommand*{\PP}{\mathbbm{P}}
\newcommand*{\HH}{\mathbbm{H}}

\newcommand*{\Jac}{\opname{Jac}}
\newcommand*{\ka}{\kappa}
\newcommand*{\la}{\lambda}
\newcommand*{\ga}{\gamma}

\newcommand*{\rk}{\opname{rk}}
\newcommand*{\pr}{\opname{pr}}
\newcommand*{\id}{\opname{id}}
\newcommand*{\tup}[1]{\underline{#1}}

\newcommand*{\len}{\opname{l}}

\newcommand*{\End}{\opname{End}}

\newcommand*{\Fl}{\opname{Fl}}
\newcommand*{\St}{\opname{St}}
\newcommand*{\norm}[1]{\|#1\|}
\newcommand*{\tr}{\opname{tr}}

\newcommand*{\Spec}{\opname{Spec}}

\newcommand*{\sstable}{\textup{(}semi\nobreakdash-\textup{)}stable}
\newcommand*{\QSw}{\opname{QSw}}
\newcommand*{\QHP}{\opname{QHP}}
\newcommand*{\HP}{\opname{HP}}

\addto\extrasenglish{%
}

%%%%%%%%%%%%%%%%%%%%%%%
% Theorems
%%%%%%%%%%%%%%%%%%%%%%%%%%%%%%%%
\newcounter{internal}[section]
 
\newaliascnt{intthm}{internal} 
\newaliascnt{intprop}{internal} 
\newaliascnt{intcor}{internal} 
 
\newaliascnt{intlemma}{internal} 
\newaliascnt{intdef}{internal} 
\newaliascnt{intex}{internal} 
\newaliascnt{intrem}{internal}

%%%%%%%%%%%%%
\theoremstyle{plain}
\newtheorem{thm}[intthm]{Theorem}
\newtheorem{prop}[intprop]{Proposition}
\newtheorem{cor}[intcor]{Corollary}
\newtheorem{lemma}[intlemma]{Lemma}
%%%%%%%%%%%%%%%%%%
\theoremstyle{definition}
\newtheorem{mydef}[intdef]{Definition}
%%%%%%%%%%%%%%%%%
\theoremstyle{remark}
\newtheorem{rem}[intrem]{Remark}

\begin{document}
\title[Parabolic Higgs Bundles as Asymptotically Stable Decorated Swamps]
  {Stable Parabolic Higgs Bundles as Asymptotically Stable Decorated Swamps}
\author{Nikolai Beck}
\email{Nikolai.Beck@b-tu.de}
\address{
  Mathematisches Institut\\
  BTU Cottbus--Senftenberg\\
  PF 101344\\
  03013 Cottbus\\
  Germany}
%\classification{14H60, 14D20}
\keywords{Parabolic Higgs bundles; Asymptotic stability}

\begin{abstract}
Parabolic Higgs bundles can be described in terms of decorated swamps, which we studied in a recent paper. 
This description induces a notion of stability of parabolic Higgs bundles depending on a
parameter, and we construct their moduli space inside the moduli space of decorated swamps. 
We then introduce asymptotic stability of decorated swamps in order to study the behavior 
of the stability condition as one parameter approaches infinity. The main result is
the existence of a constant, such that stability with respect to parameters greater than this constant is equivalent 
to asymptotic stability. This implies boundedness of all decorated swamps which are semistable with respect to some parameter. Finally, we recover the usual stability condition 
of parabolic Higgs bundles as asymptotic stability.
\end{abstract}

\maketitle

%%%%%%%%%%%%%%%%%%%%%%%%%%%%%%%%%%%%
\section{Introduction}
Let $X$ be smooth projective curve over the complex numbers. By the famous theorem of Narasimhan--Seshadri  stable vector bundles on $X$ of degree zero correspond to irreducible unitary representations of the fundamental group $\pi_1(X)$ in $\GL(n,\CC)$ \cite{NaSe1965}. In order to describe all irreducible representations one needs to look at stable Higgs bundles, which Hitchin introduced in \cite{hitchin1987}. Their moduli space was then constructed by Nitsure \cite{nitsure1991moduli}. If $x_0$ is a point of $X$, the irreducible unitary representations of $\pi_1(X\setminus\{x_0\})$ with a fixed monodromy are in one-to-one correspondence with the stable parabolic vector bundles on $X$, i.e., vector bundles with a weighted flag in the fiber over $x_0$ \cite{mehta80}. As a combination of these results, there is a bijection between all irreducible representations of $\pi_1(X\setminus \{x_0\})$ and stable parabolic Higgs bundles \cite{simpson1990}. In order to be able to construct a compact moduli space, we consider the larger 
category of 
parabolic Hitchin pairs. This strategy is similar to the compactification of the moduli space of Higgs bundles (without parabolic structure) in \cite{Schmitt1998}.

In a recent article we studied vector bundles with a general global and local decoration, which we called decorated swamps \cite{Beck2014DecSwamps}. More precisely, we introduced a notion of stability, which  depends on two positive rational parameters $\delta_1$ and $\delta_2$, and constructed the moduli space of stable objects. In this paper, we show that parabolic Hitchin pairs can be realized as a subset of decorated swamps. In this way, they inherit a notion of $\delta_1$-stability. 

As a first result we identify the moduli space of $\delta_1$-stable parabolic Hitchin pairs as a closed subscheme of the moduli space of $(\delta_1,\delta_2)$-stable decorated swamps. In \autoref{sec:Asymptotic_stability} we introduce the notion of asymptotic $\delta_2$-stability for decorated swamps to study the limit $\delta_1\to \infty$. As the main result of this paper we prove the existence of a constant $\Delta$, such that asymptotic $\delta_2$-stability is equivalent to $(\delta_1,\delta_2)$-stability for all $\delta_1\ge \Delta$. An immediate consequence is the boundedness of the class of all decorated swamps which are $(\delta_1,\delta_2)$-semistable for any $\delta_1$. In \autoref{sec:stable_PHB} we are finally able to show that asymptotic stability reproduces the usual parameter free stability condition for parabolic Higgs bundles. Hence, the usual moduli space of stable parabolic Hitchin pairs is a closed subscheme of the moduli space of asymptotically stable decorated swamps.

\subsection*{Notation and Conventions}
Throughout this article we will use the notation introduced in \cite{Beck2014DecSwamps}. In particular, we identify a vector bundle $E$ with its sheaf of sections, and we denote by $\PP(E)$ the hyperplane bundle $\opname{Proj}(\opname{Sym}^*E)$

\subsection*{Acknowledgement}
This paper is an improved account of the results obtained in Chapter 7 of the authors PhD thesis \cite{Beck2014}. The author would like to thank his supervisor Alexander Schmitt for his encouragement and support.

%%%%%%%%%%%%%%%%%%%%%%%%%%%%%%%%%%%%
\section{Preliminaries}
\label{sec:Preliminaries}
In this section we state some less well known facts of Geometric invariant theory (GIT) and recall the necessary definitions from our last paper \cite{Beck2014DecSwamps}.
\subsection{The Instability One-Parameter Subgroup}
Let us remind the reader of some results regarding the instability one-parameter subgroup due to Kempf and Ramanan--Ramanathan.

Let $\rho:G\times X\to X$ be the action of an affine reductive group $G$ on a scheme $X$ with a linearization in a line bundle $L$. 
Mumford introduced the notion of (semi-)stable points and proved that the good quotient (geometric quotient) of the open set of semistable (stable) points exists (\cite{GIT}, Theorem 1.10). 
In the case that $X$ is projective and $L$ is ample the (semi-)stable points can be identified by the Hilbert--Mumford criterion (\cite{GIT}, Theorem 2.1): Let $\la:\CC^*\to G$ be a 
one-parameter subgroup and $x\in X$ a point. The limit point $x_\infty:=\lim_{t\to\infty} \la(t)\cdot x$ is a fixed point for the $\CC^*$ action. The action on 
the fiber $L_{x_\infty}$ is of the form $t\cdot l=t^{\gamma}l$ for some $\ga\in \ZZ$. If one defines $\mu_\rho(\la,x):=-\ga$, then $x$ is (semi-)stable if and only if 
any non-trivial one-parameter subgroup $\lambda: \CC^*\to G$ satisfies $\mu_\rho(\la,x)(\ge)0$.

Suppose $X=\PP(V)$ and the action is given by a representation $\rho:G\to \GL(V)$. Fix a maximal torus $T\subset G$. Then one can decompose $V$ as a direct sum 
$V=\bigoplus_{\chi\in X^*(T)} V^\chi$ of eigenspaces $V^\chi:=\{v\in V\,|\, \rho(t)\cdot v=\chi(t)v\}$. The \emph{set of states of $\rho$} is the finite set
\[
 \St_T(\rho):=\{\chi\in X^*(T)\,|\,V^\chi\neq \{0\}\}\,,
\]
the set of states of a point $[f]\in \PP(V)$ represented by $f\in V^\vee\setminus\{0\}$ is the subset $\St_T(\rho,[f]):=\{\chi\in X_*(T)\,|\, f_{|V^\chi}\neq 0\}$. With this definition one finds for $\la\in X_*(T)$
\[
 \mu_\rho(\la,x)=-\min\{\langle \chi,\la \rangle\,|\, \chi \in \St_T(\rho,x)\}\,.
\]
We now consider the real vector spaces $X_{* \RR}(T):=X_*(T)\otimes_\ZZ \RR$ and $X^*_\RR(T):=X^*(T)\otimes_\ZZ \RR$. With a one-parameter subgroup $\la$ of $G$ we associate the parabolic subgroup 
\[
 Q_G(\la):=\{ g\in G\,|\, \lim_{t\to \infty}\la(t)\cdot g \cdot \la^{-1}(t) \textnormal{ exists in }G\}\,.
\]
Fix a Borel subgroup $B$ containing $T$ and consider the closure of the Weyl chamber 
\[
 C:=\{\la \in X_*(T)\,|\, Q_G(\la)\supset B\}\,.
\]
and the convex rational polyhedral cone $C_\RR\subset X_{* \RR}(T)$ generated by $C$. Two characters $\chi,\chi'\in X^*(T)$ define the \emph{wall}
\[
 W_{\chi,\chi'}:=\{\la \in C_\RR\,|\, \langle \chi-\chi',\la\rangle =0\}\,.
\]
For any finite set of characters $S$ the walls $W_{\chi,\chi'}$, $\chi,\chi'\in S$, determine a decomposition of $C$ into finitely many locally closed, rational, polyhedral cones $C_i$, $i\in I(S)$. The function
\[
 X_{*\RR}\to \RR\,,\qquad \la \mapsto -\min\{\langle \chi,\la \rangle\,|\, \chi \in S)\}
\]
is then linear on $C_i$, $i\in I(S)$. For any $i\in I(S)$ and any edge of $C_i$ there is a unique primitive integral generator and we let $\Gamma(S)$ denote the set of all of these generators. The observations imply the following Lemma.
\begin{lemma} \label{lem:finitely_many_1PSGs}
 For a point $x\in \PP(V)$ the following conditions are equivalent:
 \begin{enumerate}
  \item The point $x$ is \sstable.
  \item For all $g\in G$ and all $\la\in \Gamma(\St_T(\rho))$ we have $\mu_\rho(\la,g\cdot x)(\ge)0$.
 \end{enumerate}
\end{lemma}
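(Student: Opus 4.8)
The plan is to obtain both implications from the Hilbert--Mumford criterion recalled above, together with the piecewise linearity of $\la\mapsto\mu_\rho(\la,\cdot)$ across the cones $C_i$. The implication $(i)\Rightarrow(ii)$ is immediate: being \sstable{} is a $G$-invariant property, so if $x$ is \sstable{} then so is every translate $g\cdot x$, and the Hilbert--Mumford criterion yields $\mu_\rho(\la,g\cdot x)\,(\ge)\,0$ for \emph{every} non-trivial one-parameter subgroup $\la$ of $G$, in particular for all $\la\in\Gamma(\St_T(\rho))$.

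For $(ii)\Rightarrow(i)$ I would again use the Hilbert--Mumford criterion and verify $\mu_\rho(\la,x)\,(\ge)\,0$ for an arbitrary non-trivial one-parameter subgroup $\la$ of $G$. Every one-parameter subgroup is conjugate into the fixed maximal torus $T$ and then, by a suitable Weyl group element, into the cone $C$; hence there are $g\in G$ and $\la'\in C$ with $\la=g\la'g^{-1}$. Using the transformation rule $\mu_\rho(g\la'g^{-1},x)=\mu_\rho(\la',g^{-1}\cdot x)$ --- which holds because $(g\la'g^{-1})(t)\cdot x$ and $\la'(t)\cdot(g^{-1}\cdot x)$ have limit points related by $g$ and the linearization identifies the relevant fibres $G$-equivariantly --- I would reduce the claim to $\mu_\rho(\la',x')\,(\ge)\,0$ for $x':=g^{-1}\cdot x$. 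Hypothesis $(ii)$, applied with the translate $g^{-1}\cdot x$ in the role of $x$, already gives $\mu_\rho(\tilde\la,x')\,(\ge)\,0$ for all $\tilde\la\in\Gamma(\St_T(\rho))$.

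It then remains to pass from the generators in $\Gamma(\St_T(\rho))$ to $\la'$. Since $\St_T(\rho,x')\subseteq\St_T(\rho)$, the walls defining the cones $C_i$, $i\in I(\St_T(\rho))$, refine the chamber decomposition attached to $\St_T(\rho,x')$, so the map $\la\mapsto\mu_\rho(\la,x')=-\min\{\langle\chi,\la\rangle\mid\chi\in\St_T(\rho,x')\}$ is linear on each $C_i$. As $\la'$ lies in the closure of some $C_i$, I would write $\la'=\sum_j c_j\la_j$ as a non-negative rational combination of the primitive integral generators $\la_j$ of the edges of that cone --- all of which belong to $\Gamma(\St_T(\rho))$ --- and invoke linearity to get $\mu_\rho(\la',x')=\sum_j c_j\,\mu_\rho(\la_j,x')\,(\ge)\,0$, which settles the semistable case. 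In the stable case one needs $\mu_\rho(\la',x')>0$ whenever $\la'\neq 0$: each $\mu_\rho(\la_j,x')>0$ because the $\la_j$ are non-trivial, and $\la'\neq 0$ forces at least one $c_j$ to be positive.

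The step I expect to be the main obstacle is the decomposition $\la'=\sum_j c_j\la_j$: one must ensure that the closed cone $\overline{C_i}$ containing $\la'$ is spanned by its edges, whose primitive generators are exactly the relevant members of $\Gamma(\St_T(\rho))$ --- equivalently, one has to control any lineality of the Weyl cone coming from the connected centre of $G$ (which causes no difficulty in the situations relevant to this paper). Granting this, the non-negative decomposition and the linearity established above give $\mu_\rho(\la',x')\,(\ge)\,0$, and combined with the trivial direction this yields the stated equivalence.
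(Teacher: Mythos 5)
Your argument is correct and is exactly the proof the paper intends: the paper states the lemma as an immediate consequence of the preceding observations (conjugating an arbitrary one-parameter subgroup into the Weyl chamber, the wall-and-chamber decomposition induced by $\St_T(\rho)$ on which $\la\mapsto\mu_\rho(\la,\cdot)$ is linear, and the finitely many primitive edge generators $\Gamma(\St_T(\rho))$), and you have simply written these steps out in full, including the correct handling of the stable case and the relevant caveat about pointedness of the cones.
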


There is an embedding $i:G\to \GL(r)$ for some $r$. Let $D_r$ denote the maximal torus of diagonal matrices in $\GL(r)$. Without loss of generality we may assume $i(T)\subset D_r$. The isomorphism $X_*(D_r)\cong \ZZ^r$ and the standard pairing on $\ZZ^r$ induce a Weyl-invariant scalar product $(-,-)_T$ on $X_{*\RR}(T)$. We denote by $\norm{-}_T$ the corresponding norm on $X_{*\RR}(T)$. If $T'$ is another torus and $\la \in X_{*\RR}(T')$, then there is $g\in G$ with $gT'g^{-1}=T$ and we set $\norm{\la}:=\norm{g\la g^{-1}}_T$. By Lemma 2.8 in Chapter 2 of \cite{GIT} this is independent of the choice of $g$. For a point $x\in \PP(V)$ we set
\begin{align*}
  \nu_x: X_{*}(G)\setminus\{0\} & \to \RR\\
    \la& \mapsto \frac{\mu_\rho(\la,x)}{\norm{\la}} \,.
\end{align*}
For later application we note:
\begin{lemma}[Ramanan--Ramanathan, \protect{\cite[Lemma 1.1 (i)]{RamRam84}}]
  \label{lem:one_minimum}
 The induced function $\nu_x$ on $X_{*\RR}(T)$ has at most one negative minimum.
\end{lemma}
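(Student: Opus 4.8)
The plan is to reduce everything to two facts: that $\mu_\rho(\cdot,x)$ is convex and positively homogeneous of degree one, and that $\norm{\cdot}_T$ is strictly convex because it comes from a scalar product. From the formula $\mu_\rho(\la,x)=-\min\{\langle\chi,\la\rangle\mid\chi\in\St_T(\rho,x)\}=\max\{-\langle\chi,\la\rangle\mid\chi\in\St_T(\rho,x)\}$ one reads off that $\mu:=\mu_\rho(\cdot,x)$ is a finite maximum of linear forms, hence convex, positively homogeneous of degree one, with $\mu(0)=0$; this is also what makes the extension to all of $X_{*\RR}(T)$ meaningful. Since $\nu_x=\mu/\norm{\cdot}_T$ is homogeneous of degree $0$, it descends to the unit sphere $S=\{\la\in X_{*\RR}(T)\mid\norm{\la}_T=1\}$, where it coincides with $\mu|_S$, and a ``minimum of $\nu_x$'' is by definition a local minimum of $\mu|_S$, i.e.\ a ray. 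So the assertion becomes: $\mu|_S$ has at most one local minimum of negative value.

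The only computation needed is the following. Given $\la_0,\la_1\in S$, set $\sigma(t)=(1-t)\la_0+t\la_1$. Convexity gives $\mu(\sigma(t))\le(1-t)\mu(\la_0)+t\mu(\la_1)$, while the triangle inequality for the scalar-product norm gives $\norm{\sigma(t)}_T\le 1$, with equality if and only if $\la_0$ and $\la_1$ are non-negative multiples of one another, i.e.\ $\la_0=\la_1$. Moreover $\sigma(t)\ne 0$ on $[0,1]$ unless $\la_1=-\la_0$; but if $\mu(\la_0)<0$ and $\mu(\la_1)<0$ this is excluded, since otherwise $0=\mu(0)\le\tfrac12\mu(\la_0)+\tfrac12\mu(\la_1)<0$. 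Consequently, whenever the right-hand side of the convexity estimate is negative, dividing it by $\norm{\sigma(t)}_T\in(0,1]$ can only make it smaller, so $\nu_x(\sigma(t))\le(1-t)\mu(\la_0)+t\mu(\la_1)$.

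Two short arguments then conclude. \emph{A negative local minimum of $\mu|_S$ is a global minimum:} if $\la_0\in S$ is a local minimum with $\mu(\la_0)<0$ but some $\la_1\in S$ has $\mu(\la_1)<\mu(\la_0)$, then for small $t>0$ the point $\sigma(t)/\norm{\sigma(t)}_T\in S$ lies in any prescribed neighbourhood of $\la_0$, yet by the estimate $\nu_x(\sigma(t))\le(1-t)\mu(\la_0)+t\mu(\la_1)<\mu(\la_0)=\nu_x(\la_0)$, contradicting locality. \emph{The negative minimum value is attained at a unique point of $S$:} if $\la_0\ne\la_1$ both realise the minimal value $-m<0$, apply the estimate at $t=\tfrac12$: then $0<\norm{\sigma(\tfrac12)}_T<1$ strictly and $\nu_x(\sigma(\tfrac12))\le-m/\norm{\sigma(\tfrac12)}_T<-m$, contradicting minimality. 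Together these show $\nu_x$ has at most one negative minimum.

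The step I expect to need the most care is not any individual inequality but the bookkeeping involved in moving between $X_{*\RR}(T)$ and $S$: the natural interpolation is the affine segment $\sigma(t)$ in the vector space, which must be renormalised to land on $S$, and the crucial inequality $\nu_x(\sigma(t))\le(1-t)\mu(\la_0)+t\mu(\la_1)$ only holds because the interpolated value is negative, so that dividing by $\norm{\sigma(t)}_T\le 1$ goes in the right direction — this is exactly where $\mu(0)=0$ and the convexity of $\mu$ are used, to guarantee $\sigma(t)\ne 0$. Once this is in place, nothing further about $G$ enters beyond the existence of the Weyl-invariant scalar product already fixed in the text.
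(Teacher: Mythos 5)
Your proof is correct. The paper does not prove this lemma itself --- it simply cites \cite[Lemma 1.1 (i)]{RamRam84} --- and your argument (convexity of $\mu_\rho(\cdot,x)$ as a finite maximum of linear forms, strict convexity of the inner-product norm, the observation that two directions with negative $\mu$ cannot be antipodal because $\mu(0)=0$, and the division by $\norm{\sigma(t)}_T\in(0,1]$ which strengthens the inequality precisely because the interpolated value is negative) is exactly the standard convexity argument underlying the cited result, carried out completely and with the delicate points (nonvanishing of $\sigma(t)$, the sign condition needed before dividing by the norm) correctly handled.
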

The main purpose of the function $\nu_x$ is the definition of the instability one-parameter subgroup.
\begin{prop}[Kempf, {\cite[Theorem 2.2]{kempf1978}}]
 \label{prop:Kempf}
 Let $x\in \PP(V)$ be an unstable point.
 \begin{enumerate}
  \item The function $\nu_x$ attains a minimum $m_0<0$ at a point $\la_0\in X_*(G)$.
 \item Let $T$ be a maximal torus and $\la_0$ an indivisible one-parameter subgroup of $T$ such that $\nu_x(\la_0)=m_0$. Then, for every one-parameter subgroup $\la'$ of $T$ with $\nu_x(\la')=m_0$ there is a positive integer $k$ such that $\la'=k\cdot \la_0$.
 \item If $\la_0$, $\la_1$ are two indivisible one-parameter subgroups such that $\nu_x(\la_0)=\nu_x(\la_1)=m_0$, then $Q(\la_0)=Q(\la_1)$ and $\la_0$ and $\la_1$ are conjugate in $Q(\la_1)$.
 \end{enumerate}
\end{prop}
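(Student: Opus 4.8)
The plan is to treat the three assertions separately, reducing everything to the combinatorics on a single maximal torus together with \autoref{lem:one_minimum}.

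\emph{Step 1: reduction to one torus, and (ii).} First I would fix a maximal torus $T$ and note, from the formula $\mu_\rho(\la,x)=-\min\{\langle\chi,\la\rangle\mid\chi\in\St_T(\rho,x)\}$ recalled above, that $\la\mapsto\mu_\rho(\la,x)$ is continuous, piecewise linear and positively homogeneous of degree $1$ on $X_{*\RR}(T)$; since $\norm{\cdot}$ has the same homogeneity, $\nu_x$ is continuous and homogeneous of degree $0$ on $X_{*\RR}(T)\setminus\{0\}$, hence determined by its restriction to the compact unit sphere. So $\nu_x$ attains a minimum $m(T)$ on $X_{*\RR}(T)$, and $m(T)<0$ for at least one $T$ because $x$ is unstable and Hilbert--Mumford produces a one-parameter subgroup with $\mu_\rho<0$. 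By \autoref{lem:one_minimum} this negative minimum is unique up to positive scaling, and since the $\chi$ lie in the lattice $X^*(T)$ and $\norm{\cdot}^2$ is a rational quadratic form, its indivisible generator lies in $X_*(T)$. This already yields (ii): if $\la_0\in X_*(T)$ is indivisible with $\nu_x(\la_0)=m_0:=\inf_\la\nu_x(\la)$, then $m(T)=m_0<0$, so by \autoref{lem:one_minimum} any $\la'\in X_*(T)$ with $\nu_x(\la')=m_0$ lies on the same ray as $\la_0$ and therefore equals $k\la_0$ for a positive integer $k$. It remains to prove (i) and (iii).

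\emph{Step 2: existence of a global minimiser --- the main obstacle.} Here I would take a minimising sequence $\la_n$ for $\nu_x$, conjugate each $\la_n$ into the closed Weyl chamber $C$ of $T$, writing $\la_n=g_n\mu_ng_n^{-1}$ with $\mu_n\in C$ and $\norm{\mu_n}=1$, and use the identity $\mu_\rho(g\la g^{-1},x)=\mu_\rho(\la,g^{-1}x)$ --- a consequence of the linearization, since conjugation by $g$ identifies the relevant fibres of $L$ compatibly with the $\CC^*$-actions --- to rewrite $\nu_x(\la_n)=\mu_\rho(\mu_n,g_n^{-1}x)$. As $\mu_n$ lies in the compact set $C\cap\{\norm{\cdot}=1\}$ and $g_n^{-1}x$ in the projective, hence compact, variety $\PP(V)$, a subsequence satisfies $\mu_n\to\mu_\infty$ and $g_n^{-1}x\to y$. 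Because the conditions ``$f|_{V^\chi}\neq0$'' are open, the map $z\mapsto\St_T(\rho,z)$ is lower semicontinuous, so $\St_T(\rho,g_n^{-1}x)\supseteq\St_T(\rho,y)$ for $n\gg0$; passing to the limit gives $\mu_\rho(\mu_\infty,y)\le m_0$, i.e.\ $\nu_y(\mu_\infty)\le m_0$ with $\norm{\mu_\infty}=1$. The genuinely delicate point --- the step I expect to be the main obstacle --- is to transfer this destabilising one-parameter subgroup from the limit point $y\in\overline{G\cdot x}$, which may sit on the boundary of the orbit, back to $x$ itself. Here I would follow Kempf: either prove properness of the natural morphism from the flag-variety bundle parametrising a one-parameter subgroup together with a compatible limit, or, using the finiteness of $\Gamma(\St_T(\rho))$ underlying \autoref{lem:finitely_many_1PSGs}, arrange the minimising sequence to lie in only finitely many $G$-translates of $T$, so that the $g_n$ stay bounded and $y$ remains in the orbit.

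\emph{Step 3: uniqueness up to $Q_G(\la)$, i.e.\ (iii).} Granting (i), let $\la_0,\la_1$ be indivisible with $\nu_x(\la_0)=\nu_x(\la_1)=m_0$ and put $P_i:=Q_G(\la_i)$. I would first record two facts. (a) For $q\in Q_G(\la)$ one has $\mu_\rho(\la,q\cdot x)=\mu_\rho(\la,x)$: setting $q_\infty:=\lim_t\la(t)q\la(t)^{-1}\in Z_G(\la)$ one gets $\lim_t\la(t)(qx)=q_\infty\cdot x_\infty$, and since $q_\infty$ commutes with $\la$ the linearization identifies the fibres of $L$ over $x_\infty$ and $q_\infty x_\infty$ equivariantly for the $\CC^*$-actions, so the weights agree; hence $\nu_x(q\la q^{-1})=\mu_\rho(\la,q^{-1}x)/\norm{\la}=\nu_x(\la)$. (b) $Q_G(q\la q^{-1})=qQ_G(\la)q^{-1}$, directly from the definition. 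Now $P_0\cap P_1$ contains a maximal torus $T$ of $G$ (a standard fact about parabolic subgroups); the image of $\la_i$ lies in some maximal torus of $P_i$, and all maximal tori of $P_i$ are conjugate under $P_i$, so there is $q_i\in P_i$ with $\tilde\la_i:=q_i\la_iq_i^{-1}\in X_*(T)$, and by (a) and (b) each $\tilde\la_i$ is again indivisible, realises $m_0$, and has $Q_G(\tilde\la_i)=P_i$. Since $\tilde\la_0,\tilde\la_1\in X_*(T)$ both realise $m_0$, which equals the negative minimum of $\nu_x$ on $X_{*\RR}(T)$, \autoref{lem:one_minimum} (cf.\ (ii)) forces $\tilde\la_0=\tilde\la_1$. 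Hence $P_0=Q_G(\tilde\la_0)=Q_G(\tilde\la_1)=P_1$, and from $q_0\la_0q_0^{-1}=q_1\la_1q_1^{-1}$ we get $\la_1=(q_1^{-1}q_0)\la_0(q_1^{-1}q_0)^{-1}$ with $q_1^{-1}q_0\in P_1$, which is (iii).
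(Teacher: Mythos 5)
The paper does not prove this proposition; it is quoted verbatim from Kempf, so there is no internal argument to compare against. Judged on its own terms, your Steps 1 and 3 are sound and follow Kempf's actual line of reasoning: part (ii) is exactly the uniqueness of the negative minimum on a single torus (\autoref{lem:one_minimum}), modulo the (correct, and correctly sketched) rationality claim that the real minimiser on $X_{*\RR}(T)$ lies on a rational ray; and part (iii) via the two facts $\mu_\rho(\la,q\cdot x)=\mu_\rho(\la,x)$ for $q\in Q_G(\la)$ and $Q_G(q\la q^{-1})=qQ_G(\la)q^{-1}$, plus a common maximal torus of $Q(\la_0)\cap Q(\la_1)$, is precisely Kempf's uniqueness argument and is complete as written.

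The genuine gap is in Step 2, and you have correctly located it but not closed it. Your compactness argument produces a point $y\in\overline{G\cdot x}$ and a direction $\mu_\infty$ with $\nu_y(\mu_\infty)\le m_0$; since $y$ need not lie in the orbit of $x$, this destabilises the wrong point, and ``transferring back to $x$'' is not a finishing touch but the entire content of Kempf's completeness/properness argument (the valuative criterion for the scheme of one-parameter subgroups with compatible limits). Your alternative fix is also not viable as stated: \autoref{lem:finitely_many_1PSGs} reduces the *directions* $\la$ to the finite set $\Gamma(\St_T(\rho))$ but still quantifies over all $g\in G$, which is not compact, so the $g_n$ have no reason to stay bounded. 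The repair available from the paper's own setup is different and simpler: for $\la=g\mu g^{-1}$ with $\mu$ in the Weyl chamber, $\nu_x(\la)=-\min\{\langle\chi,\mu\rangle\mid\chi\in\St_T(\rho,g^{-1}x)\}/\norm{\mu}$ depends on $g$ only through the subset $\St_T(\rho,g^{-1}x)$ of the \emph{finite} set $\St_T(\rho)$. For each of the finitely many subsets that occur, the infimum over the unit sphere of the chamber is attained (compactness) and attained on a rational ray (it is a metric projection of a rational character onto a rational polyhedral cone with respect to a rational quadratic form), hence at an integral one-parameter subgroup. The global infimum is therefore a minimum over finitely many attained values, which gives (i) without ever leaving the orbit of $x$. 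Without one of these two completions, part (i) --- on which your proofs of (ii) and (iii) implicitly rely through the identification of $m_0$ with an attained minimum --- remains unproven.
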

\begin{mydef}
A one-parameter subgroup $\la_0$ of $G$ as in the proposition is called an \emph{instability one-parameter subgroup} for $x$, and $Q(\la_0)$ is the \emph{parabolic instability subgroup} for $x$. 
\end{mydef}
Let $x=[f]=\in\PP(V)$ be an unstable point and $\la_0$ an instability one-parameter subgroup for $x$. Then there are weights $\ga_1<\ldots <\ga_{k+1}$ and 
subspaces $V^i\subset V$ such that $\la_0(t)\cdot v=t^{\ga_i}v$ for $v\in V^i$, $t\in \CC^*$ and $1\le i \le k+1$. We define $V_j:=\bigoplus_{i=1}^j V^i$, $i_0:=\min\{ j\,|\, f_{|V_j}\neq 0\}$ and $f_{i_0}:=f_{|V_{i_0}}$. 
Then $\lim_{t\to \infty}\rho(\la(t),x)=[f_{i_0}\circ \pr_{V_{i_0}}]$ is the limit point of $x$. We denote the induced point $[f_{i_0}]\in \PP(V_{i_0}/V_{i_0-1})$ by $\bar{x}_\infty$.

The group $H:=Q_G(\la_0)/R_u(Q_G(\la_0))$ acts on the space $V_{i_0}/V_{i_0-1}$.
Let $T$ be a maximal torus with $\la_0\in X_*(T)$. There is a unique real character $\chi_0\in X^*_\RR(H)$, such that $\langle \chi_0,\la\rangle=(\la_0,\la)_T$ for all $\la\in X_{*\RR}(T)$. This definition is in fact independent  of the choice of $T$. We set $k:=\norm{\la_0}^2\in \NN$ and $\chi_*:=\norm{\la_0}m_0\chi_0\in X^*(H)$.

\begin{prop}[Ramanan--Ramanathan, {\cite[Proposition 1.12]{RamRam84}}]
\label{prop:RamRam}
 Let $x\in \PP(V)$ be an unstable point. Then, the limit point $\bar{x}_\infty\in \PP(V_{i_0}/V_{i_0-1})$ is semistable with respect to the linearization in $\mc{O}_{\PP(V_{i_0}/V_{i_0-1})}(k)$ twisted by $\chi_*$.
\end{prop}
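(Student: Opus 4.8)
The plan is to verify the Hilbert--Mumford criterion for $\bar x_\infty=[f_{i_0}]\in\PP(V_{i_0}/V_{i_0-1})$ by hand, transporting every one-parameter subgroup of $H$ back to $G$ and feeding it into the minimality of $\nu_x$ at $\lambda_0$ provided by \autoref{prop:Kempf}. Since the $H$-linearization factors through $Q:=Q_G(\lambda_0)\to H$, and every one-parameter subgroup of $H$ is the image of a one-parameter subgroup of some Levi subgroup $L\subset Q$ with $L\to H$ an isomorphism, and since the $\mu$-invariant depends only on the image in $H$, it suffices to show
\[
 k\cdot\mu_{\mc{O}(1)}(\tilde\lambda,\bar x_\infty)-\langle\chi_*,\tilde\lambda\rangle\ \ge\ 0
\]
for every one-parameter subgroup $\tilde\lambda$ of such an $L$; with the sign conventions fixed above, the left-hand side is precisely the Hilbert--Mumford invariant of $\bar x_\infty$ for $\mc{O}_{\PP(V_{i_0}/V_{i_0-1})}(k)$ twisted by $\chi_*$. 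Any such $\tilde\lambda$ commutes with $\lambda_0$ (which is central in $L$), so one may fix a maximal torus $T$ with $\lambda_0,\tilde\lambda\in X_*(T)$.

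Next I would expand two quantities along the ray $\lambda_b:=\lambda_0+b\tilde\lambda\in X_{*\RR}(T)$ for small $b\ge 0$ (so $\lambda_b\neq 0$). Because $f$ vanishes on $V_{i_0-1}$, every state of $f$ has $\lambda_0$-weight $\ge\gamma_{i_0}$, the value $\gamma_{i_0}$ being attained exactly on the nonempty set $S_{i_0}$ of states contained in $V^{i_0}$. Since the set of states is finite and $\gamma_{i_0+1}-\gamma_{i_0}>0$, for all sufficiently small $b>0$ the minimum of $\langle\chi,\lambda_b\rangle$ over states of $f$ is attained on $S_{i_0}$, and identifying $S_{i_0}$ with the set of states of $[f_{i_0}]$ on $V_{i_0}/V_{i_0-1}$ one obtains
\[
 \mu_\rho(\lambda_b,x)=-\gamma_{i_0}-b\min\{\langle\chi,\tilde\lambda\rangle:\chi\in S_{i_0}\}=\mu_\rho(\lambda_0,x)+b\cdot\mu_{\mc{O}(1)}(\tilde\lambda,\bar x_\infty),
\]
together with $\mu_\rho(\lambda_0,x)=\nu_x(\lambda_0)\norm{\lambda_0}=m_0\norm{\lambda_0}$. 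On the other hand $(\lambda_0,\tilde\lambda)_T=\langle\chi_0,\tilde\lambda\rangle$ gives $\norm{\lambda_b}^2=\norm{\lambda_0}^2+2b\langle\chi_0,\tilde\lambda\rangle+b^2\norm{\tilde\lambda}^2$, so $b\mapsto\norm{\lambda_b}$ is smooth at $0$ with value $\norm{\lambda_0}$ and right-derivative $\langle\chi_0,\tilde\lambda\rangle/\norm{\lambda_0}$.

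Then I would differentiate the minimality inequality. By \autoref{prop:Kempf}(i) the value $m_0=\nu_x(\lambda_0)$ is the global minimum of $\nu_x$, so $\nu_x(\lambda_b)=\mu_\rho(\lambda_b,x)/\norm{\lambda_b}\ge m_0$ for all small $b>0$, with equality at $b=0$. As the numerator is affine in $b$ on $[0,\varepsilon)$ and the denominator is smooth and nonzero there, the right-derivative of $\nu_x(\lambda_b)$ at $b=0$ exists and is $\ge 0$; computing it by the quotient rule and clearing the positive factor $\norm{\lambda_0}^2$ yields
\[
 \norm{\lambda_0}^2\cdot\mu_{\mc{O}(1)}(\tilde\lambda,\bar x_\infty)\ \ge\ m_0\norm{\lambda_0}\cdot\langle\chi_0,\tilde\lambda\rangle,
\]
which is exactly $k\cdot\mu_{\mc{O}(1)}(\tilde\lambda,\bar x_\infty)-\langle\chi_*,\tilde\lambda\rangle\ge 0$ since $k=\norm{\lambda_0}^2$ and $\chi_*=m_0\norm{\lambda_0}\chi_0$. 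As $\tilde\lambda$ was an arbitrary one-parameter subgroup of a Levi subgroup of $Q$, this proves semistability of $\bar x_\infty$.

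The delicate point is the first expansion of $\mu_\rho(\lambda_b,x)$: one must check that for small $b>0$ the minimizing state of $\lambda_b$ genuinely lies in $V^{i_0}$ and that the resulting slope of $b\mapsto\mu_\rho(\lambda_b,x)$ is precisely $\mu_{\mc{O}(1)}(\tilde\lambda,\bar x_\infty)$ — this rests on finiteness of the state set, the weight gap $\gamma_{i_0+1}-\gamma_{i_0}>0$, and the identification of the states of $\bar x_\infty$ with the states of $f$ contained in $V^{i_0}$. One also has to keep the sign conventions for the character twist straight, so that the rearranged inequality is literally the $\mu$-invariant of the twisted linearization. The remaining ingredients — lifting one-parameter subgroups through $Q\to H$ and the one-variable calculus — are routine.
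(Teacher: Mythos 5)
The paper does not prove this statement at all: it is quoted verbatim from Ramanan--Ramanathan \cite[Proposition 1.12]{RamRam84}, so there is no in-paper argument to compare against. Your proof is correct and is essentially the standard Kempf/Ramanan--Ramanathan argument: perturb the optimal one-parameter subgroup along $\lambda_b=\lambda_0+b\tilde\lambda$, identify the first-order term of $\mu_\rho(\lambda_b,x)$ with the Hilbert--Mumford invariant of the limit point (using the weight gap $\gamma_{i_0+1}-\gamma_{i_0}>0$ and the identification of the states of $\bar x_\infty$ with the states of $f$ of minimal $\lambda_0$-weight), and differentiate the minimality of $\nu_x$ at $b=0$. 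Two small points you should make explicit: the Levi must be taken to be $Z_G(\lambda_0)$ (an arbitrary Levi of $Q$ is only an $R_u(Q)$-conjugate of it and need not contain $\lambda_0$ centrally, but since every Levi maps isomorphically onto $H$ this costs nothing); and the inequality $\nu_x(\lambda_b)\ge m_0$ for real $b>0$ requires extending Kempf's minimality from integral to real points of $X_{*\RR}(T)$, which follows from homogeneity, continuity and density of rational rays. Also, the factor you clear in the quotient rule is $\norm{\lambda_0}^3$ rather than $\norm{\lambda_0}^2$, but the displayed inequality is the correct one. None of these affects the validity of the argument.
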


%%%%%%%%%%%%%%%%%%%%%%%%%%%%%%%%%%%%
\subsection{Stability in the Product Space}
Let $G$ be an affine reductive group and $\rho:G\to \GL(V)$ and $\sigma:G\to \GL(W)$ two representations. The aim of this section is to study the stability of a point in the product $\PP(V)\times \PP(W)$. 
\begin{mydef}
 A point $(x,y)\in\PP(V)\times\PP(W)$ is called \emph{asymptotically \textup{(}semi\nobreakdash-\textup{)}stable} if for any one-parameter subgroup $\la$ of $G$ 
 there is a constant  $M>0$ such that for all $m\ge M$ we have
 \[
  \mu_\rho(\la,x)+ m \mu_\sigma(\la,y)(\ge)0\,.
 \]
\end{mydef}
\begin{rem}
 A point $(x,y)$ is asymptotically (semi-)stable if and only if every one-parameter subgroup $\la$ of $G$ satisfies
 \begin{enumerate}
  \item $\mu_{\sigma}(\la,y) \ge 0$ and
  \item $\mu_{\sigma}(\la,y)=0\; \Longrightarrow \;\mu_{\rho}(\la,x) (\ge) 0$.
 \end{enumerate}
\end{rem}
We first note that one can always twist the linearization so that ordinary stability becomes equivalent to asymptotic stability: 
\begin{lemma}[\protect{\cite[Prop. 2.9]{schmitt2004global}}]
 There is an $n_0\in \NN$ such that for all $n\ge n_0$ a point $(x,y)\in \PP(V)\times\PP(W)$ is \textup{(}semi\nobreakdash-\textup{)}stable with respect to the linearization 
 in $\mc{O}_{\PP(V)}(1)\boxtimes\mc{O}_{\PP(W)}(n)$ if and only if $(x,y)$ is asymptotically \textup{(}semi\nobreakdash-\textup{)}stable.
\end{lemma}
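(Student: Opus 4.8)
The plan is to reduce both stability notions --- GIT (semi-)stability for $\mc{O}_{\PP(V)}(1)\boxtimes\mc{O}_{\PP(W)}(n)$ and asymptotic (semi-)stability --- to one and the \emph{same} finite family of one-parameter subgroups, chosen once and for all independently of $n$ and of the point, and then to compare the two via a bounded integer estimate. The first thing to record is that the Hilbert--Mumford weight of $\mc{O}_{\PP(V)}(1)\boxtimes\mc{O}_{\PP(W)}(n)$ at $(x,y)$ along a one-parameter subgroup $\la$ equals
\[
  \mu_\rho(\la,x) + n\,\mu_\sigma(\la,y)\,,
\]
so that (semi-)stability with respect to $\mc{O}(1,n):=\mc{O}_{\PP(V)}(1)\boxtimes\mc{O}_{\PP(W)}(n)$ asks for this quantity to be $(\ge)0$ for every non-trivial $\la$, whereas asymptotic (semi-)stability asks for $\mu_\sigma(\la,y)\ge0$ for every $\la$, together with $\mu_\rho(\la,x)(\ge)0$ whenever $\mu_\sigma(\la,y)=0$ (the remark above).

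Next I would fix a maximal torus $T$, a Borel subgroup $B\supset T$, and put $S:=\St_T(\rho)\cup\St_T(\sigma)$. The point to establish is that for every $z\in\PP(V)$ the function $\la\mapsto\mu_\rho(\la,z)$ is linear on each of the cones $C_i$, $i\in I(S)$ --- because along $C_i$ the minimum of $\langle\chi,\la\rangle$ over the characters relevant to $z$ is attained by one fixed element of $S$ --- and likewise $\la\mapsto\mu_\sigma(\la,w)$ on $\PP(W)$; hence $\mu_\rho(\cdot,x)+n\mu_\sigma(\cdot,y)$ is linear on each $C_i$, and this \emph{for all $n$ at once}. Feeding this into the argument behind \autoref{lem:finitely_many_1PSGs} --- conjugate an arbitrary one-parameter subgroup into the Weyl chamber, and use that a linear function is $(\ge)0$ on a polyhedral cone if and only if it is so on the primitive generators of its edges --- then gives: $(x,y)$ is $\mc{O}(1,n)$-(semi-)stable if and only if $\mu_\rho(\la,g\cdot x)+n\mu_\sigma(\la,g\cdot y)(\ge)0$ for all $g\in G$ and all $\la\in\Gamma(S)$; and, applying the same reduction to the two clauses defining asymptotic (semi-)stability, $(x,y)$ is asymptotically (semi-)stable if and only if $\mu_\sigma(\la,g\cdot y)\ge0$ for all such $g,\la$, with $\mu_\rho(\la,g\cdot x)(\ge)0$ whenever equality holds. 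For the conditional clause one uses that $\mu_\sigma(\cdot,g\cdot y)$, being linear and nonnegative on each cone, can vanish at an interior point only if it already vanishes at every edge generator that enters with positive coefficient.

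With both conditions now expressed through the \emph{finite} set $\Gamma(S)$, the comparison is elementary. Put $A:=\max\{\,|\langle\chi,\la\rangle| : \chi\in S,\ \la\in\Gamma(S)\,\}$, a constant depending only on $\rho$ and $\sigma$. Since each $\la\in\Gamma(S)$ is an integral one-parameter subgroup, $\mu_\sigma(\la,g\cdot y)$ is an integer with $|\mu_\sigma(\la,g\cdot y)|\le A$, so it is $0$, or $\ge1$, or $\le-1$, while $|\mu_\rho(\la,g\cdot x)|\le A$. Taking $n_0:=A+1$ and $n\ge n_0$, a case distinction on $\mu_\sigma(\la,g\cdot y)$ settles the sign of $\mu_\rho(\la,g\cdot x)+n\mu_\sigma(\la,g\cdot y)$: it is $\ge n-A>0$ when $\mu_\sigma(\la,g\cdot y)\ge1$, it equals $\mu_\rho(\la,g\cdot x)$ when $\mu_\sigma(\la,g\cdot y)=0$, and it is $\le A-n<0$ when $\mu_\sigma(\la,g\cdot y)\le-1$. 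Comparing with the two asymptotic clauses yields the asserted equivalence, and $n_0$ has come out independent of $(x,y)$, as required.

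The one step that needs genuine care is the reduction in the second paragraph. The tempting route --- realizing $\PP(V)\times\PP(W)$ inside $\PP(V\otimes\opname{Sym}^n W)$ by Segre and Veronese and quoting \autoref{lem:finitely_many_1PSGs} there --- does not work, because the set of states of that embedding grows with $n$, so the resulting test set is not uniform in $n$. The argument hinges instead on the \emph{additivity} of the Hilbert--Mumford weight in the two factors, which is precisely what lets the single fixed set $\Gamma(\St_T(\rho)\cup\St_T(\sigma))$ serve for every $n$. Everything after that is the bounded integer estimate above, together with the routine bookkeeping needed to keep the stable and semistable versions in step.
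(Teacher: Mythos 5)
Your argument is correct. The paper does not actually prove this lemma---it only cites \cite[Prop.~2.9]{schmitt2004global}---so there is no in-text proof to compare against; but your route is the standard one and dovetails with the machinery the paper itself sets up around \autoref{lem:finitely_many_1PSGs}: decompose the Weyl chamber by the walls coming from the single finite set $S=\St_T(\rho)\cup\St_T(\sigma)$, on whose chambers $\mu_\rho(\cdot,g\cdot x)$ and $\mu_\sigma(\cdot,g\cdot y)$ are simultaneously linear for every point and every $g$ (the $S$-decomposition refines the one coming from the states of any individual point), reduce both stability notions to the $n$-independent test set $\Gamma(S)$, and finish with the integrality bound $n_0=A+1$. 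The two delicate points are both handled: you rightly reject the Segre--Veronese embedding into $\PP(V\otimes\opname{Sym}^nW)$, whose set of states grows with $n$, and your reduction of the conditional clause of asymptotic stability to edge generators (a linear functional that is nonnegative on a cone and vanishes at a point must vanish on every edge generator entering that point with positive coefficient) is exactly the step needed to make the finite test set work in both directions. The only thing left implicit---at the same level of informality as the paper's own \autoref{lem:finitely_many_1PSGs} and its remark that the cones ``may be assumed pointed''---is that pointedness is what upgrades ``positive on the primitive edge generators'' to ``positive on all nonzero points of the cone'' in the strictly stable case.
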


For later purposes we need the following result on instability one-parameter subgroups.
\begin{prop}[Schmitt]
\label{prop:instability_in_product}
 Let $G$ be an affine reductive group and $\rho:G\to \GL(V)$ and $\sigma:G\to \GL(W)$ two representations. Then there is an $n_0\in \NN$ such that for all $n\ge n_0$ and every 
 point $(x,y)\in\PP(V)\times\PP(W)$ which is unstable with respect to the linearization in $\mc{O}_{\PP(V)}(1)\boxtimes\mc{O}_{\PP(W)}(n)$, but for which $y$ is semistable, 
 every instability one-parameter subgroup $\la_0$ for $(x,y)$ satisfies
 \[
  \mu_\sigma(\la_0,y)=0\,.
 \]
\end{prop}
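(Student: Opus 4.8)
The plan is to push the problem into a fixed maximal torus, where the Hilbert--Mumford function becomes piecewise linear with a supply of linear pieces that is finite and \emph{independent of the point}, and then to show that once $n$ is large enough, every instability one-parameter subgroup must sit on a face of this piecewise-linear structure along which $\mu_\sigma(-,y)$ vanishes.

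Fix the maximal torus $T\subset G$, the Weyl-invariant scalar product $(-,-)_T$ and the induced norm $\norm{-}$ from \autoref{sec:Preliminaries}; using $(-,-)_T$ to identify $X^*_\RR(T)\cong X_{*\RR}(T)$, regard $\St_T(\rho)$ and $\St_T(\sigma)$ as finite subsets of $X_{*\RR}(T)$. The hyperplanes $(\chi-\chi',-)_T=0$, $\chi,\chi'\in S:=\St_T(\rho)\cup\St_T(\sigma)$, decompose $X_{*\RR}(T)$ into a finite complete fan $\mc F$, and by the construction preceding \autoref{lem:finitely_many_1PSGs} both $\la\mapsto\mu_\rho(\la,x)$ and $\la\mapsto\mu_\sigma(\la,y)$ are linear on every cone $\tau\in\mc F$: one has $\mu_\rho(\la,x)=(a_\tau,\la)_T$ and $\mu_\sigma(\la,y)=(b_\tau,\la)_T$ on $\tau$, with $a_\tau\in-\St_T(\rho)$ and $b_\tau\in-\St_T(\sigma)$. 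The decisive point is that $a_\tau$ and $b_\tau$ range over the fixed finite sets $-\St_T(\rho)$ and $-\St_T(\sigma)$, no matter what $x$, $y$ and $n$ are. I would then put $M:=\max\{\norm{\chi}:\chi\in\St_T(\rho)\}$, let $p_U$ denote orthogonal projection onto a linear subspace $U$, set $\epsilon:=\min\{\norm{p_U(b)}: U=\opname{span}(\tau),\ \tau\in\mc F,\ b\in-\St_T(\sigma),\ p_U(b)\neq0\}$, and define $n_0:=\lceil M/\epsilon\rceil+1$. (If $\rho|_T$ is trivial the hypotheses are contradictory, and if $\sigma|_T$ is trivial the conclusion is automatic, so I may assume $M>0$ and that the set defining $\epsilon$ is non-empty, hence $\epsilon>0$.)

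Now take $n\ge n_0$, let $(x,y)$ be unstable for $\mc{O}_{\PP(V)}(1)\boxtimes\mc{O}_{\PP(W)}(n)$ with $y$ semistable, and let $\la_0$ be an instability one-parameter subgroup for $(x,y)$. The image of $\la_0$ lies in a maximal torus conjugate to $T$; since semistability of $y$, instability of $(x,y)$, the property of being an instability one-parameter subgroup, and the quantity $\mu_\sigma(\la_0,y)$ are all equivariant under the diagonal $G$-action, I may replace $(x,y)$ by a $G$-translate and assume $\la_0\in X_*(T)$ (and, rescaling, $\norm{\la_0}=1$). Because $\la_0$ is an instability one-parameter subgroup, the function $\nu_{(x,y)}$, given on $X_{*\RR}(T)$ by $\nu_{(x,y)}(\la)=\bigl(\mu_\rho(\la,x)+n\mu_\sigma(\la,y)\bigr)/\norm{\la}$, attains its global minimum $m_0<0$ at $\la_0$ (\autoref{prop:Kempf}), so $\la_0$ minimizes $\nu_{(x,y)}$ over $X_{*\RR}(T)$, in particular over the cone $\tau_0\in\mc F$ having $\la_0$ in its relative interior. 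Write $U_0:=\opname{span}(\tau_0)$ and $c:=a_{\tau_0}+n\,b_{\tau_0}\in X_{*\RR}(T)$, so that $\nu_{(x,y)}(\la)=(c,\la)_T/\norm{\la}$ on $\tau_0$. As $\la_0$ is a relative-interior minimizer of this function on $\tau_0$, it is a critical point of $\la\mapsto(c,\la)_T/\norm{\la}$ on $U_0$; the gradient condition forces $p_{U_0}(c)\neq0$ (otherwise $m_0=0$) and $\la_0=-p_{U_0}(c)/\norm{p_{U_0}(c)}$. Hence, using $\la_0\in U_0$ and $p_{U_0}(c)=p_{U_0}(a_{\tau_0})+n\,p_{U_0}(b_{\tau_0})$,
\[
 \mu_\sigma(\la_0,y)=(b_{\tau_0},\la_0)_T=-\frac{\bigl(p_{U_0}(b_{\tau_0}),p_{U_0}(a_{\tau_0})\bigr)_T+n\,\norm{p_{U_0}(b_{\tau_0})}^2}{\norm{p_{U_0}(c)}}\,.
\]
If $p_{U_0}(b_{\tau_0})=0$, the right-hand side is $0$ and we are done. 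Otherwise $\norm{p_{U_0}(b_{\tau_0})}\ge\epsilon$ while $\norm{p_{U_0}(a_{\tau_0})}\le M$, so by Cauchy--Schwarz the numerator is at least $\norm{p_{U_0}(b_{\tau_0})}\,(n\epsilon-M)$, which is positive because $n\ge n_0$; then $\mu_\sigma(\la_0,y)<0$, contradicting semistability of $y$. Therefore $\mu_\sigma(\la_0,y)=0$.

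The crux --- and where I expect the real work to lie --- is the middle step: extracting from the critical-point identity $\la_0=-p_{U_0}(c)/\norm{p_{U_0}(c)}$ a cutoff $n_0$ that does not depend on $(x,y)$. This succeeds only because the linear coefficients $a_{\tau_0}$ and $b_{\tau_0}$ are trapped in the fixed finite sets of negated states, which is precisely what makes $M$ and, above all, the strictly positive lower bound $\epsilon$ on the nonzero projections $p_U(b)$ uniform in the point; verifying the degenerate cases and the equivariance used to reduce to $T$ is then routine.
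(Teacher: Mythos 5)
Your proof is correct, and while it shares the paper's overall frame --- conjugate the instability one-parameter subgroup into a fixed maximal torus, cut $X_{*\RR}(T)$ by the walls coming from the finite sets of states into cones on which $\mu_\rho(-,x)$ and $\mu_\sigma(-,y)$ are linear with coefficients confined to $-\St_T(\rho)$ and $-\St_T(\sigma)$, and extract a uniform cutoff $n_0$ from that finiteness --- the decisive step is genuinely different. The paper (following Schmitt) writes each cone as a join of the face $P_i(\chi_\sigma)$ where $\mu_\sigma$ vanishes with the complementary polytope $Q_i(\chi_\sigma)$, proves by compactness that $\partial_t N(i,\chi_\sigma,\chi_\sigma)$ is uniformly positive near $t=0$, chooses $n$ so that $\partial_t\bigl(N(i,\chi_\rho,\chi_\sigma)+nN(i,\chi_\sigma,\chi_\sigma)\bigr)>0$, and then invokes the uniqueness of the negative minimum (\autoref{lem:one_minimum}) to force the minimizer onto the face where $\mu_\sigma=0$; a separate case ($F=\{0\}$, controlled by the constant $n_1$) is needed when $\mu_\sigma$ is positive on the whole cone. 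You instead solve for the minimizer in closed form, $\la_0=-p_{U_0}(c)/\norm{p_{U_0}(c)}$ with $c=a_{\tau_0}+n\,b_{\tau_0}$, read off $\mu_\sigma(\la_0,y)$ directly, and a single Cauchy--Schwarz estimate with the uniform constants $M$ and $\epsilon$ forces $p_{U_0}(b_{\tau_0})=0$ on pain of contradicting the semistability of $y$. This subsumes both of the paper's nontrivial cases at once and dispenses with the join parametrization, the compactness argument, and the appeal to \autoref{lem:one_minimum}; the price is only that your $n_0$ is expressed through the projections $p_U(b)$ onto the spans of the finitely many cones rather than through derivative bounds, which is no loss. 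In a written-up version you should make explicit the two points you currently pass over quickly: that $\la_0$, lying in the relative interior of $\tau_0$, is an \emph{unconstrained} local minimizer of $(c,\la)_T/\norm{\la}$ on $U_0=\opname{span}(\tau_0)$, so the gradient computation is legitimate, and that Kempf's minimum over $X_*(G)$ agrees with the minimum over $X_{*\RR}(T)$ by homogeneity and density of the rational points.
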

This is Theorem 2.1.10 in \cite{schmitt2004global}. We present a slightly simplified version of the proof.
\begin{proof}
 As before the set of states $\St_T(\rho\otimes\sigma)$ determines a finite set $I$ and a decomposition of $K_\RR$ into rational polyhedral cones $K_i$, $i\in I$, such that 
 for $(x,y)\in \PP(V)\times\PP(W)$ and $i\in I$ there exist characters $\chi_{i,\rho}$, $\chi_{i,\sigma}$ with
 \[
  \mu_\rho(\la,x)=-\langle \chi_{i,\rho},\la\rangle \,,\qquad \mu_\sigma(\la,x)=-\langle \chi_{i,\sigma},\la\rangle
 \]
for all $\la \in K_i$. Without loss of generality we may assume the $K_i$ to be pointed. For each $i\in I$ we choose a hyperplane $H_i$ such that $K_i$ is the cone over the 
polytope $P_i:=K_i\cap H_i$. For an index $i\in I$ let $S_\sigma(i)\subset \St_T(\sigma)$ be the set of states $\chi_\sigma$ such that $\langle \chi_\sigma,\la\rangle \le 0$ 
for all $\la\in K_i$ and $P_i(\chi):=\{\la\in P_i\,|\,\langle\chi_\sigma,\la\rangle=0 \}$ is a proper face of $P_i$. Let $Q_i(\chi_\sigma)\subset P_i$ be the convex hull of 
the vertices of $P_i$ not contained in $P_i(\chi_\sigma)$. Then for every $l\in P_I$ there exist $l_1\in P_i(\chi_\sigma)$, $l_2\in Q_i(\chi_\sigma)$ and $0\le t\le 1$ such 
that $l=(1-t)l_1+tl_2$. For another character $\chi$ we define
\begin{align*}
 N(i,\chi,\chi_\sigma) \colon  \RR \times P_i(\chi_\sigma) \times Q_i(\chi_\sigma) & \to \RR \\
     (t,l_1,l_2) &\mapsto -\frac{\langle\chi, (1-t)l_1+tl_2\rangle}{\norm{(1-t)l_1+tl_2}}\,.
\end{align*}
Because of $\langle \chi_\sigma,l_1\rangle =0$ for $l_1\in P_i(\chi_\sigma)$ and $\langle\chi_\sigma,l_2\rangle<0 $ for $l_2\in Q_i(\chi_\sigma)$ we find
\[
 \frac{\partial}{\partial
t}N(i,\chi_\sigma,\chi_\sigma)(0,l_1,l_2)=-\frac{\langle\chi_\sigma,l_2\rangle\cdot
\norm{l_1}-0}{\norm{l_1}^2}=
  \frac{-\langle\chi_\sigma,l_2\rangle}{\norm{l_1}}>0\,.
\]
Since $P_i(\chi_\sigma)$ and $Q_i(\chi_\sigma)$ are compact, there is an $\epsilon(i,\chi_\sigma)>0$ such that 
\[
\partial_tN(i,\chi_\sigma,\chi_\sigma)(t,l_1,l_2)>0
\]
for all $(t,l_1,l_2)\in R:= [0,\epsilon(i,\chi_\sigma)]\times P_i(\chi_\sigma)\times Q_i(\chi_\sigma)$. 

For $i\in I$, $\chi_\sigma\in S_\sigma(i)$ let $S_\rho(i,\chi_\sigma)\subset \St_T(\rho)$ be the set of states $\chi_\rho$ such that there exists $l \in P_i(\chi_\sigma)$ 
with $\langle\chi_\rho,l\rangle>0$. We define
\begin{align*}
 C_1(i,\chi_\rho,\chi_\sigma) :&= \min\{\partial_t N(i,\chi_\rho,\chi_\sigma)(t,l_1,l_2)\,|\,
(t,l_1,l_2)\in R\}\,,\\
 C_2(i,\chi_\sigma) :&= \min\{\partial_t N(i,\chi_\sigma,\chi_\sigma)(t,l_1,l_2)\,|\,
(t,l_1,l_2)\in R\}>0\,.
\end{align*}
Now we choose $n>n_0:=\max\{n_1,n_2\}$ with
\begin{align*}
 n_2:&=\max\left\{-\frac{C_1(i,\chi_\rho,\chi_\sigma)}{C_2(i,\chi_\sigma)}\,|\, i\in I,\chi_\sigma\in S_\sigma(i),\chi_\rho\in S_\rho(i,\chi_\sigma) \right\}\,, \\
 n_1:&=\max\{\langle\chi,\lambda\rangle \,|\,\chi \in\St_T(\rho),\lambda \in \Gamma(\St_T(\rho\otimes\sigma))\}\,.
\end{align*}

Let $(x,y)\in \PP(V)\times\PP(W)$ be unstable with respect to $\mc{O}_{\PP(V)}(1)\boxtimes \mc{O}_{\PP(W)}(n)$ and let $\la_0$ be an instability one-parameter subgroup. 
Then, there is an element $g\in G$ such that $\la':=g\la g^{-1}$ lies in $K$. We consider the points $x':=\rho(g,x)$ and $y':=\sigma(g,y)$. Let $j\in I$ be an index with 
$\la'\in K_j$ and set $\chi_1:=\chi_{j,\rho}$, $\chi_2:=\chi_{j,\sigma}$.

Since $y$ is semistable $F:=\{l \in K_j\,|\,\mu_{\sigma}(l,y')=0\}$ is a face of $K_j$.

(i) In case $F=K_j$ we find $\mu_\sigma(\la_0,y)=\mu_\sigma(\la',y')=0$.

(ii) In case $F=\{0\}$ we find
\[
  \mu_\rho(\lambda,x') + n \mu_\sigma(\lambda,y')\ge -n_1 + n > 0\,
 \]
for all integral primitive generators $\la$ of $K_j$. This contradicts the instability of $(x,y)$.

(iii) Suppose that $F$ is a non-trivial proper face of $K_j$, i.e. $\chi_2\in S_\sigma(j)$. There has to be at least one primitive generator $\la$ of an edge of $F$ 
with $\mu_\rho(\la,x)<0$ because $(x,y)$ is unstable. This shows $\chi_1\in S_\rho(j,\chi_2)$. Then, for all $l_1\in P_j(\chi_2)$, $l_2\in Q_j(\chi_2)$ and $0\le t\le \epsilon(j,\chi_2)$ our choice of $n$ implies
\begin{align*}
 \partial_t (N(j,\chi_1,\chi_2)+n N(j,\chi_2,\chi_2))(t,l_0,l_2) 
 \ge C_1(j,\chi_1,\chi_2)+n_2C_2(j,\chi_2)(t,l_0,l_2)>0\,.
\end{align*}
Hence, for $l(t):=(1-t)l_0+tl_2$ the function
\[
 t\mapsto \nu_\rho(l(t),x')+n\nu_\sigma(l(t),y')=N(j,\chi_1,\chi_2)(t,l_0,l_2)+nN(j,\chi_2,\chi_2)(t,l_0,l_2)
\]
is strictly increasing and the function $\nu_\rho+ n\nu_\sigma$ must attain a negative minimum at a point $l_0\in P_j(\chi_2)$. By \autoref{lem:one_minimum} this is 
the global minimum. Because $\la_0$ was assumed to be the instability one-parameter subgroup $\la'$ is a multiple of $l_0$. Hence, $\la'$ lies in $F$ so that $\mu_\sigma(\la_0,y)=\mu_\sigma(\la',y')=0$.
\end{proof}

%%%%%%%%%%%%%%%%%%%%%%%%%%%%%%%%%%%%%%%%%%%%
\subsection{Decorated Swamps}
We recall the definition of a (semi-)stable decorated swamp from \cite{Beck2014DecSwamps}, Section 3: Let $X$ be a smooth projective curve of genus $g$ and fix two homogeneous representations $\rho:\GL(r)\to V_1$ and $\sigma:\GL(r)\to V_2$. 
\begin{mydef}
A \emph{decorated swamp} is a tuple $(E,L,\phi,s)$ where $E$ is a vector bundle $E$ of rank $r$, $L$ is a line bundle on $X$, $\phi:E_\rho\to  L$ is a non-trivial homomorphism and $s$ is a point in $E^\vee_{\sigma|\{x_0\}}$. 
\end{mydef}
Two decorated swamps $(E,L,\phi,s)$ and $(E',L',\phi',s')$ are considered isomorphic if there are isomorphisms $f:E\to E'$, $\psi:L\to L'$ and a number $c\in \CC^*$ with $\phi'\circ f_\rho=\psi\circ \phi$ and $ s\circ f_{\sigma|\{x_0\}}=c\cdot s$. Here $f_\rho:E_\rho\to E'_\rho$ and $f_\sigma:E_\sigma\to E'_\sigma$ are the isomorphisms induced by $f$. The \emph{type} of a decorated swamp $(E,L,\phi,s)$ is the tuple $(\deg(E),\deg(L))$. In the following we will fix integers $d$ and $l$ and only consider decorated swamps of type $(d,l)$. 

Let $(E,L,\phi,s)$ be a decorated swamp. Recall that a weighted flag of a vector bundle $E$ is a flag $E_\bullet$ of $E$ together with weights $\alpha_i\in \QQ_{>0}$, $1\le i \le \len(E_\bullet)$. We define the function
\[
 M(E_\bullet,\tup{\alpha}):=\sum_{j=1}^{\len(E_\bullet)}\alpha_j\left(\deg(E)\rk(E_j)-\deg(E_j)\rk(E) \right)\,.
\]

By \S 2.4 in \cite{Beck2014DecSwamps}, a weighted flag $(E_\bullet,\tup{\alpha})$ of $E$ induces weighted flags $(E_{\bullet,\rho},\tup{\alpha}_\rho)$ and $(E_{\bullet,\sigma},\tup{\alpha}_\sigma)$ of the associated bundles $E_\rho$ and $E_\sigma$. We restrict these to the generic point $\eta$ of $X$ and the point $x_0$ respectively. Using the notation of \cite[\S 2.1]{Beck2014DecSwamps} we set
\[
 \mu_1(E_\bullet,\tup{\alpha},\phi):=\mu(E_{\bullet,\rho|\eta},\tup{\alpha}_\rho,[\phi])\,, \qquad
 \mu_2(E_\bullet,\tup{\alpha},s):=\mu(E_{\bullet,\sigma|\{x_0\}},\tup{\alpha}_\sigma,[s])\,.
\]
Here, $[\phi]\in \PP(E_{\rho|\eta})$ and $[s]\in\PP(E_{\sigma|\{x_0\}})$ are the points defined by $\phi$ and $s$.
\begin{mydef}\label{def:swamp_stability}
 Let $\delta_1,\delta_2$ be positive rational numbers. We call a decorated swamp $(E,L,\phi,s)$ \emph{$(\delta_1,\delta_2)$-\textup{(}semi\nobreakdash-\textup{)}stable} if the condition
 \[
  M(E_\bullet,\tup{\alpha})+\delta_1\mu_1(E_\bullet,\tup{\alpha},\phi)+\delta_2 \mu_2(E_\bullet,\tup{\alpha},s) (\ge) 0
 \]
holds for all weighted flags $(E_\bullet,\tup{\alpha})$ of $E$.
\end{mydef}

%%%%%%%%%%%%%%%%%%%%%%%%%%%%%%%%%%%%%
\section{Parabolic Higgs Bundles as Decorated Swamps}
\label{sec:par_Higgs}
In this section, we define $\delta_1$-(semi-)stable parabolic Higgs bundles and construct their moduli space. Let us fix an integer $d$, a line bundle $L$ on $X$, a sequence $0<r_1<\ldots<r_k<r$ of natural numbers and positive rational numbers $\beta_1,\ldots,\beta_k$ with $\sum_{i=1}^k \beta_i<1$.

\subsection{Parabolic Hitchin Pairs}
A \emph{parabolic Higgs bundle} is a vector bundle $E$ of rank $r$ and degree $d$ together with a twisted endomorphism $\phi:E\to E\otimes L$ and a flag $V_\bullet$ of type $\tup{r}$ in $E_{|\{x_0\}}$ which is $\phi$-invariant, i.e. $\phi_{|\{x_0\}}(V_i)\subset V_i\otimes L_{|\{x_0\}}$. An isomorphism $\psi:(E,\phi,V_\bullet)\to (E',\phi',V'_\bullet)$ of parabolic Higgs bundles is an isomorphism $\psi:E\to E'$ such that $\phi'\circ\psi=(\psi\circ\otimes\id_L)\circ \phi$ 
and $\psi_{|\{x_0\}}(V_i)=V'_i$ for $1\le i\le k$.

In order to obtain a projective moduli space we enlarge the category by allowing \enquote{infinite} endomorphisms (compare \cite{Schmitt1998} or Section 2.3.6 in \cite{Schmitt08}).
\begin{mydef}
 A \emph{parabolic Hitchin pair} is a tuple $(E,\phi,\epsilon,V_\bullet)$ where $(E,\phi,V_\bullet)$ is a parabolic Higgs bundle and $\epsilon$ is a complex number such that $\phi$ is non-trivial or $\epsilon\neq 0$.
\end{mydef}
Two parabolic Hitchin pairs $(E,\phi,\epsilon,V_\bullet)$ and $(E',\phi',\epsilon,V'_\bullet)$ are considered \emph{isomorphic} if there are an isomorphism $\psi:E \to E'$ and a number $c\in \CC^*$ with $\psi_{|\{x_0\}}(V_i)\subset V'_i$ for $1\le i\le k$, $\phi'\circ\psi=c\cdot(\psi\circ\otimes\id_L)\circ \phi$ and $\epsilon'=c\cdot \epsilon$. 

A \emph{family of parabolic Hitchin pairs} parameterized by a scheme $S$ is a tuple 
\[
\mc{F}=(E_S,N_S,\phi_S,\epsilon_S,V_{S\bullet})\,,
\]
 where 
\begin{itemize}
\item $E_S$ is a vector bundle of rank $r$ on $S\times X$, such that for every point $s\in S$ the bundle $E_{S|\{s\}\times X}$ is of degree $d$,
\item $N_S$ is a line bundle on $S$,
\item $\phi_S:E_S\to E_S\otimes \pr_S^*N_S \otimes \pr_X^*L$ is a homomorphism,
\item $\epsilon_S:\mc{O}_S\to N_S$ is a homomorphism, such that for all $s\in S$ we have $\phi_{S|\{s\}\times X}\neq 0$ or $\epsilon_{S|\{s\}}\neq 0$ 
\item and $V_{S\bullet}$ is flag of type $\tup{r}$ in $E_{S|S\times\{x_0\}}$.  
\end{itemize}
Note that such a family defines an isomorphism class of parabolic Hitchin pairs for every point $s\in S$.

Two such families $\mc{F}$ and $\mc{F}'$ over $S$ are \emph{isomorphic} if there are a line bundle $T$ on $S$, an isomorphism $f:E_S\to E'_S\otimes\pr_S^* T$ with $f_{|S\times\{x_0\}}(V_\bullet)=V'_\bullet$ and an isomorphism $h_1:N_S\to N'_S\otimes T$ with $(\phi'_S\otimes\id_{\pr_S^*T})\circ f= (f\otimes \pr_S^*h_1\otimes \pr_X^*\id_L)\circ \phi_S$.

\subsection{The Associated Decorated Swamp}
We now explain how to construct a family of decorated swamps (see Definition. 3.7 in \cite{Beck2014DecSwamps}) from a family of parabolic Hitchin pairs: Let $l$ be an integer such that there are inclusions $\iota_1:L\to \mc{O}_X(l)$ and $\iota_2:\mc{O}_X\to \mc{O}_X(l)$. We consider the representation $\rho:\GL(r)\to\GL(\End(\CC^r)^\vee \oplus \CC)$, where $\GL(r)$ acts by conjugation on $\End(\CC^r)^\vee$ and trivially on $\CC$. Let $(E_S,N_S,\phi_S,\epsilon_S,V_{S\bullet})$ be a parameterized family of parabolic Hitchin pairs. Using the evaluation map $\tr:E_S^\vee\otimes E_S\to \mc{O}_X$ we define $\tilde{\phi}$ as the composition
\[\xymatrixcolsep{3.5em}\xymatrix{
 \tilde{\phi}:\End(E_S)^\vee  \ar[r]^-{\id_{E_S^\vee}\otimes\phi_S} &  E_S^\vee\otimes E_S\otimes \pr_S^*N_S \otimes \pr_X^*L  \ar[rr]^-{\tr\otimes\id_{\pr_S^*N_S}\otimes\pr_X^*\iota_1} & &
 \pr_S^*N_S \otimes \pr_X^*\mc{O}_X(l)\,.
 }
\]
% \[
%  \tilde{\phi}_S:= (\tr\otimes\id_{\pr_S^*N_S}\otimes\pr_X^*\iota_1) \circ(\id_{E_S^\vee}\otimes\phi_S):\End(E_S)^\vee\to \pr_S^*N_S \otimes \pr_X^*\mc{O}_X(l)\,.
% \]
The homomorphism $\epsilon_S$ and $\iota_2$ give a homomorphism 
\[
 \tilde{\epsilon}_S:=\pr_S^*\epsilon_S\otimes\pr_X^*\iota_2:\mc{O}_{S\times X}\to \pr_S^*N_S\otimes \pr_X^*\mc{O}_X(l),.
\]
Combined, these define a non-trivial homomorphism
\[
 (\tilde{\phi}_S,\tilde{\epsilon}_S):E_{S,\rho}\cong \End(E_S)^\vee\oplus\mc{O}_{X\times S}\to \pr_X^*\mc{O}_X(l)\otimes\pr_S^*N_S\,.
\]

The flag variety $\opname{Fl}(\CC^r,\tup{r})$ of flags of type $\tup{r}$ in $\CC^r$ can be embedded in the product of $k$ Grassmannians. Using the Pl\"ucker embeddings and the Segre embedding, we get an embedding in $\PP(V_2)$ with
\[
 V_2:= \left(\bigotimes_{i=1}^k \left(\bigwedge^{r_i} (\CC^r)\right)^{\otimes z\beta_i}\right)^\vee\,.
\]
Here, $z$ is the least common denominator of $\beta_1,\ldots,\beta_k$. Let $\sigma$ be the natural action of $\GL(r)$ on $V_2$. Then, there is an embedding of the flag variety $\Fl(E_{S},\tup{r})$ in $\PP(E_{S\sigma})$, and the flag $V_{S\bullet}$ of $E_{S|S\times\{x_0\}}$ determines a section $f:S\to \PP(E_{S,\sigma|S\times \{x_0\}})$. Let 
\[
s_S:E_{S,\sigma|S\times\{x_0\}}\to M_S:=f^*\mc{O}_{\PP(E_{S,\sigma|S\times \{x_0\}})}(1)
\]
be the induced surjective homomorphism. Then, 
\[
\Psi(E_S,N_s,\phi_S,\epsilon_S,V_{S\bullet}):=(E_S,\mc{O}_X(l),N_S,M_S,(\tilde{\phi}_S,\tilde{\epsilon}_S),s_S)
\]
is a family of decorated swamps of type $(d,l)$.

\begin{rem}
  The map $\Psi$ is compatible with isomorphisms and thus induces a natural transformation from the moduli functor of parabolic Hitchin pairs to the moduli functor of decorated swamps. As we will see in \autoref{prop:QHP_closed_subscheme_of_QSw} it is in fact injective.
\end{rem}
 Via $\Psi$ the category of parabolic Hitchin pairs inherits the notion of stability and S-equivalence from the category of decorated swamps. Set $\delta_2:=1/z$.
\begin{mydef}
We call a parabolic Hitchin pair \emph{$\delta_1$-\textup{(}semi\nobreakdash-\textup{)}stable} if its associated decorated swamp is $(\delta_1,\delta_2)$-(semi-)stable.
We call two parabolic Hitchin pairs \emph{S-equivalent} if their associated decorated swamps are S-equivalent. 
\end{mydef}

\subsection{Parabolic Hitchin Quotients}
By Proposition 4.1 in \cite{Beck2014DecSwamps} the class of vector bundles $E$, such that a $\delta_1$-semistable parabolic Hitchin pair with $E$ as the underlying vector bundle exists, is bounded. Hence, there is a number $n_0$ such that for all $n\ge n_0$ and every $(\delta_1,\delta_2)$-semistable parabolic Hitchin pair $(E,\phi,\epsilon,V_\bullet)$ the bundle $E(n)$ is globally generated and $H^1(E(n))$ vanishes. We fix a complex vector space $Y$ of dimension $p(n):=d+r(n+1-g)$.

\begin{mydef}
 A \emph{family of parabolic Hitchin quotients} parameterized by a scheme $S$ is a tuple $(q_S,N_S,\phi_S,\epsilon_S,V_{S\bullet})$, where $q_S:Y\otimes\pr_X^*\mc{O}_X(-n)\to E_S$ is a vector bundle quotient on $S\times X$, such that $(E_S,N_S,\phi_S,\epsilon_S,V_{S\bullet})$ is a family of parabolic Hitchin pairs on $S\times X$ and
 \[
 \pr_{S*}(q_S\otimes\id_{\pr_X^*\mc{O}_X(n)}): Y\otimes \mc{O}_S \to \pr_{S*}E(n)
 \]
 is an isomorphism.
\end{mydef}
The map $\Psi$ also associates a family of decorated quotient swamps (see Definition 4.2 in \cite{Beck2014DecSwamps}) with a family of parabolic Hitchin quotients. This construction induces a natural transformation between the two moduli functors. If the moduli space of parabolic Hitchin quotients exists, this natural transformation defines a morphism to the fine moduli space of decorated quotient swamps $\QSw$ constructed in Proposition 4.3 in \cite{Beck2014DecSwamps}. The following proposition shows that the moduli space does exist and that this morphism is a closed immersion.
\begin{prop}\label{prop:QHP_closed_subscheme_of_QSw}
 The fine moduli space of parabolic Hitchin quotients $\QHP$ exists as a closed subscheme of the moduli space of decorated quotient swamps $\QSw$.
\end{prop}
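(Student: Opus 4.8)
The plan is to realise $\QHP$ as the closed subscheme of $\QSw$ over which the universal decorated quotient swamp lies in the image of the natural transformation induced by $\Psi$, and then to check that this subscheme carries a universal family of parabolic Hitchin quotients. Since $\QSw$ is a fine moduli space by Proposition~4.3 of \cite{Beck2014DecSwamps}, there is a universal decorated quotient swamp on $\QSw\times X$; write it as $(\mathbf{q},\mathbf{L},\mathbf{N},\mathbf{M},\tilde\psi,s)$, where $\mathbf{q}\colon Y\otimes\pr_X^*\mc{O}_X(-n)\twoheadrightarrow\mathbf{E}$ is the universal quotient, $\mathbf{L}$ the line bundle in the $L$-slot, $\tilde\psi=(\tilde\phi,\tilde\epsilon)\colon\mathbf{E}_\rho=\End(\mathbf{E})^\vee\oplus\mc{O}_{\QSw\times X}\to\mathbf{L}\otimes\mathbf{N}$ the global decoration, and $s\colon\mathbf{E}_{\sigma|\QSw\times\{x_0\}}\to\mathbf{M}$ the local decoration. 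Under the canonical identification $\Hom(\End(\mathbf{E})^\vee,\mathbf{L}\otimes\mathbf{N})=\Hom(\mathbf{E},\mathbf{E}\otimes\mathbf{L}\otimes\mathbf{N})$ the component $\tilde\phi$ is a twisted endomorphism. Let $D_1,D_2$ be the effective divisors of the fixed inclusions $\iota_1\colon L\to\mc{O}_X(l)$ and $\iota_2\colon\mc{O}_X\to\mc{O}_X(l)$.

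Define $Z\subseteq\QSw$ to be the locus where the universal family satisfies:
\begin{enumerate}
 \item $\mathbf{L}$ is fibrewise isomorphic to $\mc{O}_X(l)$ (so that on $Z$ the $L$-slot agrees with $\pr_X^*\mc{O}_X(l)$ up to a twist from the base);
 \item $\tilde\phi$, viewed as a twisted endomorphism, vanishes along $\QSw\times D_1$, equivalently factors through $\mathbf{E}\otimes\pr_X^*L\otimes\mathbf{N}$ via $\iota_1$, giving a Higgs field $\phi\colon\mathbf{E}\to\mathbf{E}\otimes\pr_X^*L\otimes\mathbf{N}$;
 \item $\tilde\epsilon$ vanishes along $\QSw\times D_2$, equivalently factors through $\iota_2$, giving, after pushing to the base, a homomorphism $\epsilon\colon\mc{O}_Z\to\mathbf{N}$;
 \item the morphism $\QSw\to\PP(\mathbf{E}_{\sigma|\QSw\times\{x_0\}})$ classified by $s$ factors through the relative flag variety $\Fl(\mathbf{E}_{|\QSw\times\{x_0\}},\tup{r})$, giving a flag $\mathbf{V}_\bullet$ of type $\tup{r}$;
 \item on the locus where (ii) and (iv) hold, $\phi_{|\{x_0\}}(\mathbf{V}_i)\subseteq\mathbf{V}_i\otimes\pr_X^*L_{|\{x_0\}}$ for all $i$.
\end{enumerate}
Condition~(i) is closed by semicontinuity of $h^0$ applied to $\mathbf{L}$ twisted by $\pr_X^*\mc{O}_X(-l)$ (a degree-zero line bundle with a nonzero section is trivial); conditions~(ii), (iii) and~(v) are vanishing loci of homomorphisms of coherent sheaves, which become closed subschemes once one pushes the relevant $\Hom$-sheaves forward to $\QSw$ after a sufficiently positive twist and takes the zero locus of the resulting section of a vector bundle; condition~(iv) is the preimage of the closed subscheme $\Fl(\mathbf{E}_{|\QSw\times\{x_0\}},\tup{r})\subset\PP(\mathbf{E}_{\sigma|\QSw\times\{x_0\}})$. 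Hence $Z$ is a closed subscheme of $\QSw$.

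Over $Z$ the recovered data $(\mathbf{q}_{|Z},\mathbf{N}_{|Z},\phi,\epsilon,\mathbf{V}_\bullet)$ is a family of parabolic Hitchin quotients: the quotient $\mathbf{q}$ and the line bundle $\mathbf{N}$ are inherited from the decorated swamp, $\phi$ preserves $\mathbf{V}_\bullet$ by~(v), and the requirement that $\phi$ be non-trivial or $\epsilon\neq0$ on each fibre follows from the non-triviality of $\tilde\psi$ together with the injectivity of $\iota_1$ and $\iota_2$. Conversely, applying $\Psi$ to a family of parabolic Hitchin quotients over a scheme $S$ yields, by construction, a family of decorated quotient swamps satisfying (i)--(v), so its classifying morphism $S\to\QSw$ factors through $Z$. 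These assignments are mutually inverse: applying $\Psi$ to the data recovered in (i)--(v) returns the original decorated swamp, and conversely the recovery of $(\phi,\epsilon,V_\bullet)$ from $(\tilde\phi,\tilde\epsilon,s)$ is \emph{unique}, because $\id\otimes\iota_1$, $\iota_2$ and the Pl\"ucker--Segre embedding $\Fl(\CC^r,\tup{r})\hookrightarrow\PP(V_2)$ are all injective, and in the quotient setting there are no automorphisms, so that an isomorphism of associated decorated quotient swamps is induced by a unique isomorphism of parabolic Hitchin quotients. Therefore $Z$, together with the recovered universal family, represents the moduli functor of parabolic Hitchin quotients, and $\QHP\cong Z$ is a closed subscheme of $\QSw$.

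I expect the main obstacle to be the last step: one has to match the isomorphism relation on families of parabolic Hitchin pairs --- which involves twisting by a line bundle $T$ on the base and a scalar $c\in\CC^*$ --- with the isomorphism relation on decorated quotient swamps, so that $\Psi$ is injective at the level of \emph{families} and not merely of isomorphism classes of objects; here it is essential that the quotient $\mathbf{q}$ rigidifies the underlying bundle and removes automorphisms. A second, more routine, point is to promote the loci (ii), (iii) and~(v) to honest closed subschemes via degeneracy loci of morphisms of vector bundles, rather than leaving them as closed subsets; and one should not forget the $\phi$-invariance condition~(v), which is precisely what singles out the image of $\Psi$ among decorated swamps whose decoration already splits as a Higgs field together with a flag.
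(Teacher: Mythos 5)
Your proposal is correct and follows essentially the same route as the paper: one cuts out $\QHP$ inside $\QSw$ by the successive closed conditions that the line bundle slot is $\mc{O}_X(l)$, that the two components of the decoration factor through $\iota_1$ and $\iota_2$, that the local decoration comes from a flag, and that this flag is $\phi$-invariant, and then checks that the recovered data is a universal family of parabolic Hitchin quotients. The only cosmetic difference is that the paper obtains your condition (i) for free by taking the fibre of $\QSw\to\Jac^l$ over the point $[\mc{O}_X(l)]$ rather than via semicontinuity.
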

The proof consists mainly in constructing in inverse to $\Psi$ on an appropriate set.
\begin{proof}
Let  $\Jac^l$ be the Jacobian of line bundles of degree $l$ on $X$ and choose a Poincar\'e bundle $\mc{L}$.
 Recall that $\QSw$ was constructed as a projective scheme over $\Jac^l$. Let $P_1$ be the fiber of $\QSw$ over the point corresponding to the line bundle $\mc{O}_X(l)$. On $\QSw$ we have the universal family $(\tilde{q},\tilde{\ka},\tilde{N},\tilde{M},\tilde{\psi},\tilde{s})$ of decorated quotient swamps. Consider the homomorphism
 \[
  \psi_1:\End(\tilde{E})^\vee\to \End(\tilde{E})^\vee\oplus \mc{O}_{P_1\times X}\to \pr_X^*\mc{O}_X(l)\otimes\pr_{P_1}^*\tilde{N}\to \pr_X^*(\mc{O}_X(l)/L)\otimes\pr_{P_1}^*\tilde{N}
 \]
and let $P_2\subset P_1$ be the closed subscheme such that $\psi_1$ is trivial on $P_2\times X$ (see Proposition 2.3.5.1 in \cite{Schmitt08}). On $P_2\times X$ we have the homomorphism
\[
 \tilde{\phi}:\End(\tilde{E})^\vee \to \pr_X^*L\otimes \pr_{P_2}^*\tilde{N}\,.
\]
Using the homomorphism $1:\mc{O}_{\QSw\times X}\to \tilde{E}\otimes \tilde{E}^\vee$ we construct
\[\xymatrixcolsep{3em}\xymatrix{
 \phi: \tilde{E} \ar[r]^-{1\otimes \id_{\tilde{E}}} & \tilde{E}\otimes \tilde{E}^\vee \otimes \tilde{E} \ar[r]^-{\id_{\tilde{E}}\otimes\tilde{\phi}} & \tilde{E}\otimes\pr_X^*L\otimes \pr_{\QSw}^*N\,.}
\]
Similarly, we consider the homomorphism
\[\xymatrix{
 \psi_2:\mc{O}_{P_2\times X}\ar[r] & \End(\tilde{E})^\vee\oplus\mc{O}_{P_2\times X}\ar[r] & \pr_X^*(\mc{O}_X(l)/\mc{O}_X)\otimes\pr_{P_2}^*\tilde{N}}
\]
and let $P_3\subset P_2$ be the closed subscheme such that $\psi_2$ is trivial on $P_3\times X$. On $P_3\times X$ we now have the homomorphism $\tilde{\epsilon}:\mc{O}_{P_3\times X}\to \pr_{P_3}^*\tilde{N}$. We define $\epsilon:=\pr_{P_3 *}\tilde{\epsilon}:\mc{O}_{P_3}\to \tilde{N}$.

Finally, let $P_4\subset P_3$ be the closed subscheme such that $\tilde{s}$ defines a flag $V_{\bullet}$ of $\tilde{E}_{|P_4\times \{x_0\}}$. Consider the homomorphisms induced by $\phi$
\[
  \phi_{|P_4\times \{x_0\}} V_i\to \tilde{E}_{|P_4\times \{x_0\}}/V_i\otimes \pr_{P_4}^*\tilde{N},\qquad i=1,\ldots,r\,, 
\]
and let $\QHP\subset P_4$ be the closed subscheme where these homomorphisms are trivial. Then, the family $(\tilde{q},\tilde{N},\phi,\epsilon,V_\bullet)$ on $\QHP$ is a family of parabolic Hitchin quotients. It follows from the construction that it is in fact a universal family.
\end{proof}

\subsection{The Moduli Space of Stable Hitchin Pairs}
There is a natural $\PGL(Y)$ action on $\QSw$ and the subscheme $\QHP\subset \QSw$ is $\PGL(Y)$-invariant.
If we choose $n$ sufficiently large, then there is an open subscheme $\QSw^{(\delta_1,\delta_2)\textnormal{-(s)s}}\subset \QSw$ parameterizing families of $(\delta_1,\delta_2)$-(semi\nobreakdash-)stable decorated swamps (see Corollary 5.10 in \cite{Beck2014DecSwamps}). We set $\QHP^{\delta_1\textnormal{-(s)s}}:=\QSw^{(\delta_1,\delta_2)\textnormal{-(s)s}}\subset \QSw\cap \QHP$.
\begin{rem}
 The representation $V_2$ is polynomial and homogeneous of degree 
 \[
 a_2:=\sum_{i=1}^k z\beta_i(r-r_i)\,.
 \]
 In general, to be able to apply the results of \cite{Beck2014DecSwamps}, we need to assume $a_2\delta_2<1$. However, as explained in Remark 7.1 in \cite{Beck2014DecSwamps}, in the case of parabolic  bundles we can weaken the condition to $\sum_{i=1}^k \delta_2z\beta_i<1$, which we assumed in the beginning of this section.
\end{rem}

\begin{lemma}
 The family $\mc{F}:=(\tilde{E},\tilde{N},\phi,\epsilon,V_\bullet)$ on $\QHP^{\delta_1\textnormal{-(s)s}}$ has the following properties:
 \begin{enumerate}
\item  $\mc{F}$ satisfies the local universal property for families of $\delta_1$-\textup{(}semi\nobreakdash-\textup{)}stable parabolic Hitchin pairs.
\item  For two morphisms $f_1,f_2:S\to \QHP^{\delta_1\textnormal{-(s)s}}$ the pullbacks of $\mc{F}$ are isomorphic if and only if there exists a morphism $g:S\to \PGL(Y)$ with $g\cdot f_1=f_2$. 
\end{enumerate}
\end{lemma}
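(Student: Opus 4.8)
The plan is to transfer the known local universal property of the moduli space of decorated quotient swamps (Corollary 5.10 in \cite{Beck2014DecSwamps}) across the closed immersion $\QHP\hookrightarrow\QSw$ of \autoref{prop:QHP_closed_subscheme_of_QSw}, using that the natural transformation $\Psi$ is an isomorphism onto its image in the sense made precise by that proposition. Concretely, for part (i), I would start with an arbitrary family $\mc{G}=(E_S,N_S,\phi_S,\epsilon_S,V_{S\bullet})$ of $\delta_1$-\sstable parabolic Hitchin pairs over a scheme $S$. After twisting $E_S$ by the pullback of a suitable line bundle on $S$ (replacing $N_S$ accordingly) one may assume $\pr_{S*}(E_S\otimes\pr_X^*\mc{O}_X(n))$ is locally free of rank $p(n)$ and, working locally on $S$, trivialize it to identify it with $Y\otimes\mc{O}_S$; this produces a family of parabolic Hitchin quotients, hence by \autoref{prop:QHP_closed_subscheme_of_QSw} a classifying morphism $S\to\QHP$. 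The $\delta_1$-semistability of every fiber means the associated decorated swamp $\Psi(\mc{G})$ is fibrewise $(\delta_1,\delta_2)$-\sstable, so this morphism factors through $\QSw^{(\delta_1,\delta_2)\textnormal{-(s)s}}$, i.e. through $\QHP^{\delta_1\textnormal{-(s)s}}$; and by construction the pullback of $\mc{F}$ along it is isomorphic to $\mc{G}$ (the residual ambiguity being exactly the choice of trivialization and the twist, which is absorbed into the equivalence relation defining a family). The local nature of the trivialization is why we only get the \emph{local} universal property, which is all that is claimed.

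For part (ii), I would argue as follows. If $g:S\to\PGL(Y)$ satisfies $g\cdot f_1=f_2$, then $f_1^*\mc{F}$ and $f_2^*\mc{F}$ are isomorphic because the $\PGL(Y)$-action on $\QHP$ is induced by change of the chosen basis of $Y$, which changes a family of parabolic Hitchin quotients only by an isomorphism of the underlying family of parabolic Hitchin pairs (the quotient map composed with an automorphism of $Y$ gives an isomorphic family in the sense defined above). Conversely, suppose $f_1^*\mc{F}\cong f_2^*\mc{F}$ as families of parabolic Hitchin pairs. Applying $\Psi$, the pulled-back families of decorated swamps $f_i^*\Psi(\mc{F})$ are isomorphic, hence — since $\QSw$ is a \emph{fine} moduli space for decorated quotient swamps with $\PGL(Y)$ acting by change of framing, and isomorphic families of swamps correspond to a $\PGL(Y)$-twist of the classifying maps (Proposition 4.3 in \cite{Beck2014DecSwamps} together with the associated rigidification statement) — there is a morphism $g:S\to\PGL(Y)$ with $g\cdot (i\circ f_1)=i\circ f_2$, where $i:\QHP\hookrightarrow\QSw$ is the closed immersion. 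Because $\QHP$ is $\PGL(Y)$-invariant and $i$ is a monomorphism, $g$ restricts to an action on $\QHP^{\delta_1\textnormal{-(s)s}}$ with $g\cdot f_1=f_2$, as required.

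The main obstacle I expect is the bookkeeping in part (ii): one must be careful that "isomorphic as families of parabolic Hitchin pairs" — which by definition allows a Picard-twist by a line bundle $T$ on $S$ — corresponds correctly to "isomorphic as families of decorated quotient swamps", and then that the latter is governed by a $\PGL(Y)$-action rather than a larger group. This requires invoking precisely the rigidified form of the fine moduli property of $\QSw$ from \cite{Beck2014DecSwamps} (the statement that two morphisms to $\QSw$ have isomorphic pullbacks of the universal family iff they differ by $\PGL(Y)$), and checking that the framing data $\pr_{S*}(q_S\otimes\id)\cong Y\otimes\mc{O}_S$ rigidifies exactly the line-bundle ambiguity. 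Once that compatibility is pinned down, part (i) is essentially formal given \autoref{prop:QHP_closed_subscheme_of_QSw} and Corollary 5.10 of \cite{Beck2014DecSwamps}, and the whole lemma follows.
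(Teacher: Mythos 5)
Your proposal is correct and follows essentially the same route as the paper: the paper's proof simply notes that both properties follow immediately from the corresponding properties of $\QSw$ for families of $(\delta_1,\delta_2)$-\sstable{} decorated swamps (Section 6.1 of \cite{Beck2014DecSwamps}), transferred through the closed immersion of \autoref{prop:QHP_closed_subscheme_of_QSw} and the compatibility of $\Psi$ with isomorphisms and pullbacks. Your write-up just makes explicit the local trivialization and rigidification bookkeeping that the paper leaves implicit.
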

\begin{proof}
 This follows immediately from the corresponding properties of $\QSw$ with respect to families of $(\delta_1,\delta_2)$-semistable decorated swamps (see Section 6.1 in \cite{Beck2014DecSwamps}).
\end{proof}
The existence of the good quotient of $\QSw^{(\delta_1,\delta_2)\textnormal{-(s)s}}$ (proof of Theorem 3.9 in \cite{Beck2014DecSwamps}) implies the existence of the good quotient of $\QHP^{\delta_1\textnormal{-(s)s}}$. The general theory of GIT and moduli spaces (as explained, e.g., in Section 2.2 of \cite{Beck2014DecSwamps}) yields the following result:
%%%%%%%%%%%%%%%%%%%%%%%%%%%%%%%%%%%%%%
\begin{thm}\label{thm:Existence_PH_moduli}
The coarse moduli space $\HP^{\delta_1\textnormal{-(s)s}}$ of $\delta_1$-\textup{(}semi\nobreakdash-\textup{)}stable Hitchin pairs exists as a closed subscheme of the coarse \textup{(}projective\textup{)} moduli space of $(\delta_1,\delta_2)$-\textup{(}semi\nobreakdash-\textup{)}stable decorated swamps.
\end{thm}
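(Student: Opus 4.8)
The plan is to deduce \autoref{thm:Existence_PH_moduli} by transporting the GIT picture already established for decorated swamps along the closed immersion of \autoref{prop:QHP_closed_subscheme_of_QSw}. First I would recall that $\QSw$ carries a natural $\PGL(Y)$-linearized projective structure over $\Jac^l$, and that for $n$ sufficiently large the open subscheme $\QSw^{(\delta_1,\delta_2)\textnormal{-(s)s}}$ of points parameterizing $(\delta_1,\delta_2)$-(semi\nobreakdash-)stable decorated swamps coincides with the GIT-(semi\nobreakdash-)stable locus, so that its good quotient (a geometric quotient on the stable part) exists and is a projective scheme — this is the content of Theorem 3.9 and Corollary 5.10 of \cite{Beck2014DecSwamps}, which I am allowed to invoke. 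The key point is that $\QHP\subset\QSw$ is a $\PGL(Y)$-invariant closed subscheme, and that, by the definition $\QHP^{\delta_1\textnormal{-(s)s}}:=\QSw^{(\delta_1,\delta_2)\textnormal{-(s)s}}\cap\QHP$, the (semi\nobreakdash-)stable locus inside $\QHP$ is exactly the intersection of the GIT-(semi\nobreakdash-)stable locus of $\QSw$ with this invariant closed subscheme.

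The main step is then a general GIT fact: if a reductive group $\mathcal G$ acts on a projective scheme $Z$ with a linearization in an ample line bundle, $Z'\subset Z$ is a $\mathcal G$-invariant closed subscheme, and $Z^{\textnormal{(s)s}}$ denotes the (semi\nobreakdash-)stable locus, then $Z'^{\textnormal{(s)s}}=Z^{\textnormal{(s)s}}\cap Z'$ (the Hilbert--Mumford numerical function is computed on $Z'$ exactly as on $Z$, since limit points of one-parameter subgroups starting in $Z'$ stay in $Z'$), and the good quotient $Z'^{\textnormal{ss}}/\!\!/\mathcal G$ exists as a closed subscheme of $Z^{\textnormal{ss}}/\!\!/\mathcal G$; on the stable loci one gets a geometric quotient which is a locally closed, in fact closed, subscheme of $Z^{\textnormal{s}}/\mathcal G$. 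Applying this with $Z$ a suitable parameter scheme for $\QSw$ (or directly with the good quotient machinery of Section 2.2 of \cite{Beck2014DecSwamps}), $Z'$ corresponding to $\QHP$, and $\mathcal G=\PGL(Y)$, I obtain that the good quotient of $\QHP^{\delta_1\textnormal{-(s)s}}$ exists and sits as a closed subscheme inside the good quotient of $\QSw^{(\delta_1,\delta_2)\textnormal{-(s)s}}$, i.e. inside the coarse moduli space of $(\delta_1,\delta_2)$-(semi\nobreakdash-)stable decorated swamps.

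Finally I would check the moduli-theoretic bookkeeping: by the preceding Lemma the family $\mc F=(\tilde E,\tilde N,\phi,\epsilon,V_\bullet)$ on $\QHP^{\delta_1\textnormal{-(s)s}}$ satisfies the local universal property for families of $\delta_1$-(semi\nobreakdash-)stable parabolic Hitchin pairs, and two maps into $\QHP^{\delta_1\textnormal{-(s)s}}$ give isomorphic pullbacks of $\mc F$ iff they differ by a map to $\PGL(Y)$. By the standard criterion (Section 2.2 of \cite{Beck2014DecSwamps}) this shows that the good quotient $\QHP^{\delta_1\textnormal{-(s)s)}}/\!\!/\PGL(Y)$ is a coarse moduli space $\HP^{\delta_1\textnormal{-(s)s}}$ for $\delta_1$-(semi\nobreakdash-)stable parabolic Hitchin pairs, and the inclusion of quotients identifies it with a closed subscheme of the coarse moduli space of $(\delta_1,\delta_2)$-(semi\nobreakdash-)stable decorated swamps, the latter being projective in the semistable case. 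Properness/projectivity of $\HP^{\delta_1\textnormal{-ss}}$ is then automatic as a closed subscheme of a projective scheme.

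The part requiring the most care — though it is routine rather than deep — is verifying that forming good quotients genuinely commutes with passing to the invariant closed subscheme $\QHP\subset\QSw$ at the level of the actual parameter spaces used in \cite{Beck2014DecSwamps} (as opposed to abstractly), and that the closed immersion of \autoref{prop:QHP_closed_subscheme_of_QSw} is compatible with the $\PGL(Y)$-linearizations so that the induced map on quotients is again a closed immersion; once the linearized closed-immersion statement is in place, everything else is a direct translation of the decorated-swamp moduli construction.
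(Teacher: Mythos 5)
Your proposal is correct and follows essentially the same route as the paper: the paper likewise deduces the existence of the good quotient of $\QHP^{\delta_1\textnormal{-(s)s}}$ from that of $\QSw^{(\delta_1,\delta_2)\textnormal{-(s)s}}$ via the $\PGL(Y)$-invariant closed immersion of \autoref{prop:QHP_closed_subscheme_of_QSw}, and then invokes the preceding Lemma (local universal property and the gluing property of $\mc{F}$) together with the general GIT--moduli machinery of Section 2.2 of \cite{Beck2014DecSwamps} to identify this quotient as the coarse moduli space sitting as a closed subscheme of the moduli space of decorated swamps. Your write-up merely makes explicit the standard fact about good quotients of invariant closed subschemes that the paper leaves implicit.
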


\begin{rem}
\begin{enumerate}[wide]
 \item Due to the non-linearity of the stability condition, it is difficult to describe the polystable representative of the S-equivalence class of a given semistable parabolic Hitchin pair explicitly.
\item  In general, the moduli space $\HP^{\delta_1\textnormal{-(s)s}}$ does \emph{not} contain the usual moduli space of parabolic Higgs bundles. In fact, our stability condition depends on the parameter $\delta_1$ while the usual stability condition used in \cite{konno1993,Yokogawa1993} has no parameter dependence. This is not surprising, as the stability condition of (non-parabolic) Hitchin pairs was recovered as the asymptotic stability of swamps in Section 3.6 of \cite{Schmitt04}.
\end{enumerate}
\end{rem}

%%%%%%%%%%%%%%%%%%%%%%%%%%%%%%%%%%%%
\section{Asymptotic Stability of Decorated Swamps}
\label{sec:Asymptotic_stability}
In this section we introduce the notion of asymptotic stability for decorated swamps and show that for large enough parameter $\delta_1$ this notion coincides with the stability condition given in \autoref{def:swamp_stability}.

\begin{mydef}
 We call a decorated swamp $(E,L,\phi,s)$ \emph{asymptotically
 $\delta_2$-\textup{(}semi\nobreakdash-\textup{)}stable} if for any weighted flag 
 $(E_\bullet,\tup{\alpha})$  there is a number $c_1\in \QQ_{>0}$ such that for 
 all $\delta_1\ge c_1$ the condition
 \[
  M(E_\bullet,\tup{\alpha})+\delta_1\mu_1(E_\bullet,\tup{\alpha},\phi)+\delta_2\mu_2(E_\bullet,\tup{\alpha},s) (\ge) 0
 \]
holds.
\end{mydef}
\begin{rem} \label{rem:asymptotically_stable}
 A decorated swamp $(E,L,\phi,s)$ is asymptotically $\delta_2$-(semi\nobreakdash-)stable is and only if for any weighted flag $(E_\bullet,\tup{\alpha})$ of $E$ we have
 \begin{enumerate}
  \item $\mu_1(E_\bullet,\tup{\alpha},\phi) \ge 0$ and
  \item $\mu_1(E_\bullet,\tup{\alpha},\phi)=0 \Longrightarrow M(E_\bullet,\tup{\alpha})+\delta_2\mu_2(E_\bullet,\tup{\alpha},s)(\ge) 0$.
 \end{enumerate}
\end{rem}
\begin{prop} \label{prop:stable=>asymptotically_stable}
 For given $\delta_2$ there exists $\Delta_1\in \QQ_{>0}$ such that for all $\delta_1\ge \Delta_1$ a $(\delta_1,\delta_2)$-\textup{(}semi\nobreakdash-\textup{)}stable decorated swamp is also asymptotically $\delta_2$-\textup{(}semi\nobreakdash-\textup{)}stable. 
\end{prop}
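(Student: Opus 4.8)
The plan is to reduce the claim to a finiteness statement about the relevant $\mu_1$-values and then invoke \autoref{rem:asymptotically_stable}. First I would observe that if $(E,L,\phi,s)$ is $(\delta_1,\delta_2)$-semistable for some $\delta_1$, then the underlying bundle $E$ lies in a bounded family; indeed, this is exactly the content of Proposition 4.1 in \cite{Beck2014DecSwamps} (used also in \autoref{prop:QHP_closed_subscheme_of_QSw} to fix $n_0$) once one translates the swamp stability inequality restricted to destabilizing subbundles. Boundedness of $E$, together with the fact that $\mu_1(E_\bullet,\tup{\alpha},\phi)$ is computed from the $\rho$-induced weighted flag via the Hilbert--Mumford pairing on a finite set of states $\St(\rho)$, means that the quantity $\mu_1(E_\bullet,\tup{\alpha},\phi)$ takes values, for a fixed $E$ and $\phi$ and varying weighted flags, in a set of rational numbers with bounded denominator; moreover, across the whole bounded family there is a uniform lower bound $-\mu_1^{\max}$ and a uniform positive lower bound $\mu_1^{\min}>0$ on the \emph{negative} values it can attain — call the latter the smallest positive gap. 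This is the key reduction.

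Next I would set $\Delta_1$ using this gap. Suppose $(E,L,\phi,s)$ is $(\delta_1,\delta_2)$-semistable with $\delta_1\ge \Delta_1$ and let $(E_\bullet,\tup{\alpha})$ be any weighted flag. For property (i) of \autoref{rem:asymptotically_stable}, suppose for contradiction that $\mu_1(E_\bullet,\tup{\alpha},\phi)<0$; then $\mu_1 \le -\mu_1^{\min}$. Since $M(E_\bullet,\tup{\alpha})$ and $\mu_2(E_\bullet,\tup{\alpha},s)$ are also controlled in terms of the bounded data — $M$ is bounded in absolute value by $C_M\cdot(\sum_j\alpha_j)$ for a constant $C_M$ depending only on $d$, $r$, and the bound on the family, and $\mu_2$ is bounded by $C_2\cdot(\sum_j\alpha_j)$ using the finite set of states of $\sigma$ — while $\mu_1$ also scales like $(\sum_j\alpha_j)$ with the same lower bound on its negative value relative to $\sum_j\alpha_j$, one chooses $\Delta_1 > (C_M + \delta_2 C_2)/\mu_1^{\min}$ (after normalizing so that $\sum_j\alpha_j=1$, which is legitimate since all three terms are homogeneous of degree one in $\tup{\alpha}$). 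Then $M + \delta_1\mu_1 + \delta_2\mu_2 \le C_M + \delta_2 C_2 - \delta_1\mu_1^{\min} < 0$, contradicting semistability; hence $\mu_1\ge 0$. For property (ii), suppose $\mu_1(E_\bullet,\tup{\alpha},\phi)=0$. Then the $(\delta_1,\delta_2)$-semistability inequality reads $M(E_\bullet,\tup{\alpha}) + 0 + \delta_2\mu_2(E_\bullet,\tup{\alpha},s)(\ge)0$, which is precisely the required inequality, independently of $\delta_1$.

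I would take the same $\Delta_1$ for both the stable and semistable versions, since the strict/non-strict inequality carries through the argument verbatim. The one point requiring a little care is the semistable case of (i): if $\mu_1 = 0$ one is in case (ii), and if $\mu_1 > 0$ there is nothing to prove, so the contradiction argument only needs $\mu_1 < 0$, and there the strict inequality $\mu_1 \le -\mu_1^{\min} < 0$ is what one uses — the output inequality $M+\delta_1\mu_1+\delta_2\mu_2 < 0$ then contradicts even semistability. For the stable case, stability gives the strict inequality for all nontrivial flags, and the argument is identical.

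The main obstacle is establishing the uniform positive lower bound $\mu_1^{\min}$ on the negative values of $\mu_1$ across the bounded family. Boundedness of $\{E\}$ gives a bounded family of bundles, but $\mu_1$ depends on $\phi$ as well — specifically on which graded piece of the filtration of $E_\rho$ the section $[\phi]$ is supported, which is combinatorial data (a subset of $\St(\rho)$) — so one must argue that over the bounded family there are only finitely many possibilities for the pair (isomorphism type of the filtered bundle $E_{\bullet,\rho}$ up to the data that matters, position of $[\phi]$), equivalently that $\mu_1$ is computed by a Hilbert--Mumford-type expression $-\min\{\langle\chi,\la\rangle : \chi\in\St(\rho,[\phi])\}$ over a \emph{finite} set of relevant one-parameter subgroups $\la$ (after normalization), as in \autoref{lem:finitely_many_1PSGs}. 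Granting that — which is implicit in the construction of the moduli space of decorated swamps and the GIT setup recalled in \autoref{sec:Preliminaries} — the positive gap $\mu_1^{\min}$ exists because a finite set of nonzero rationals has a smallest absolute value. Everything else is the routine homogeneity-and-boundedness estimate sketched above.
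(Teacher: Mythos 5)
Your reduction of Condition (ii) of \autoref{rem:asymptotically_stable} is fine and matches the paper: when $\mu_1(E_\bullet,\tup{\alpha},\phi)=0$ the $(\delta_1,\delta_2)$-(semi)stability inequality is literally the required one, for any $\delta_1$. The gap is in Condition (i), and it is the heart of the proposition. Your contradiction argument needs an \emph{upper} bound $M(E_\bullet,\tup{\alpha})\le C_M\sum_j\alpha_j$ for a flag witnessing $\mu_1<0$, and no such bound follows from boundedness of the family of bundles $E$: since $M(E_\bullet,\tup{\alpha})=\sum_j\alpha_j\bigl(\deg(E)\rk(E_j)-\deg(E_j)\rk(E)\bigr)$, an upper bound on $M$ requires a \emph{lower} bound on the degrees of the subbundles $E_j$, and even for a single fixed bundle saturated subbundles have arbitrarily negative degree (e.g.\ $\mc{O}(-N)\subset\mc{O}\oplus\mc{O}$). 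So a flag with $\mu_1<0$ may have $M$ so large and positive that the semistability inequality is not violated for any $\delta_1$, and your contradiction never materializes. What must be shown is that if \emph{some} flag has $\mu_1<0$ then there \emph{exists} a witness flag with $\mu_1\le-1$ \emph{and} $M$ bounded by a constant independent of the swamp. The paper produces this witness by taking the Kempf instability one-parameter subgroup of $[\phi_\eta]\in\PP(\EE_\rho)$ at the generic point, extending its flag over $X$, and then bounding $M(E_\bullet,\tup{\alpha})\le kl$ via \autoref{prop:RamRam}: the limit point is semistable for the linearization twisted by $\mu_0\chi_\la$, the linearization descends to an ample bundle on the GIT quotient $Z$, and the nonnegativity of the degree of its pullback along $f:X\to Z$ yields the bound. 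None of this machinery (or a substitute for it) appears in your proposal, and the "routine homogeneity-and-boundedness estimate" does not replace it.

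A secondary but real problem is the boundedness input itself. Proposition 4.1 of \cite{Beck2014DecSwamps} gives boundedness of the $(\delta_1,\delta_2)$-semistable family for a \emph{fixed} $\delta_1$, with constants depending on $\delta_1$; uniform boundedness over all $\delta_1$ is precisely what this paper derives \emph{as a corollary} of the present proposition (via \autoref{prop:asymptotically_stable=>bounded} and the final boundedness proposition, whose proof splits into $\delta_1\le\Delta$ and $\delta_1>\Delta$ for exactly this reason). Using it as an input to choose a single $\Delta_1$ valid for all $\delta_1\ge\Delta_1$ is circular. Note that the paper's proof needs no boundedness at all: its constants $C$ and $C_2$ come from the finitely many conjugacy classes of instability one-parameter subgroups determined by the finite set of states of $\rho$, hence depend only on the representation. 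Your observation about the positive gap in the values of $\mu_1$ is correct and is indeed used in the paper, but in the converse direction (\autoref{thm:asymptotic_stability}), not here.
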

The proof uses ideas from the proof of Theorem 2.5.5.2 in \cite{Schmitt08}.
\begin{proof}
 For arbitrary $\delta_1$, Condition (ii) in \autoref{rem:asymptotically_stable} follows from $(\delta_1,\delta_2)$-(semi\nobreakdash-)stability. Suppose $(E,L,\phi,s)$ is a $(\delta_1,\delta_2)$-semistable decorated swamp 
 such that Condition (i) is not satisfied. Let $K$ denote the function field of $X$ and $\eta$ the generic point of $X$. We also define $\EE:=E_\eta$ and $\EE_\rho:=E_{\rho,\eta}$. 
 The assumption means that $x:=[\phi_\eta]\in \PP(\EE_\rho)$ is unstable. Let $\Lambda:K^*\to \SL(\EE_\rho)$ be an instability one-parameter subgroup from \autoref{prop:Kempf} and $(\EE_\bullet,\tup{\alpha})$ its associated weighted flag. The flag $\EE_\bullet$ determines a morphism from $\eta$ to the flag variety $\Fl(E,\tup{r})$, where $\tup{r}$ is the type of $\EE_\bullet$. Since $X$ is smooth and 
 projective there is a unique extension $X\to \Fl(E,\tup{r})$, which determines a flag $E_\bullet$ of $E$. By construction this flag satisfies $\mu_0:=\mu_1(E_\bullet,\tup{\alpha},\phi) \le -1$. The flag $E_\bullet$ also induces a flag $F_\bullet$ of $E_\rho$. Let 
$i_0:=\min\{1\le i \le \len(F_\bullet)\,|\,\phi_{F_i}\neq 0\}$. Then $\phi$ induces a non-trivial homomorphism $\bar{\phi}: F_{i_0}/F_{i_0-1}\to L$. This defines a morphism $f_0:X\to \PP(F_{i_0}/F_{i_0-1})$ with
 \[
  f_0^*\mc{O}_{\PP(F_{i_0}/F_{i_0-1})}(1)= L(-D)
 \]
for some effective divisor $D$ on $X$.
 
 Now $[\bar{\phi}_\eta]$ is the limit point $\bar{x}_\infty$. By \autoref{prop:RamRam} this point is semistable with respect to the linearization of the action of $\HH:=Q_{\SL(\EE)}(\Lambda)/R_{\SL(\EE)}(\Lambda)$ in $\mc{O}_{\PP(F_{i_0}/F_{i_0-1})}(k)$ with $k:=\norm{\Lambda}^2$ twisted by $\chi_*:=\mu_0\chi_{\Lambda}$.

Let $W_\bullet$ be a flag of $\CC^r$ of the same type as $E_\bullet$ and choose an open subset $U\subset X$ with a trivialization $\psi:E_{|U}\to \CC^r\otimes \mc{O}_U$ 
such that $\psi(E_i)=W_i\otimes\mc{O}_U$. This induces an isomorphism $\SL(\EE)\cong \SL(r)\times_{\CC}\Spec(K)$. Then there exists a one-parameter subgroup $\la$ of $\SL(r)$ inducing $\Lambda$. 
The trivialization also defines an isomorphism $\HH\cong Q_{\GL(r)}(\la)/\mc{R}_u(Q_{\GL(r)}(\la))\times_\CC \Spec(K)$, that identifies $\mu_0\chi_\la\times_\CC\id_{\Spec(K)}$ with $\chi_*$. Finally, there is a flag $V_{\bullet,1}$ in $V_1$ of the same type as $F_\bullet$, such that $F_{j|U}\cong V_{j,1}\times U$.

Let $Z:=\PP(V_{i_0,1}/V_{i_0-1,1})^{\textnormal{ss}}/\!\!/H$ be the good quotient with respect to the natural linearization in $\mc{O}_{\PP(V_{i_0,1}/V_{i_0-1,1})}(k)$ twisted by $\mu_0\chi_\la$. Then there is a rational morphism
\[
 \pi:\PP(F_{i_0}/F_{i_0-1}) \dashrightarrow Z\,.
\]
By construction the composition $\pi\circ f_0$ is defined at the generic point and hence extends to a morphism $f:X\to Z$. There is an $m\in \NN_{>0}$ such that $\mc{O}_{\PP(V_{i_0,1}/V_{i_0-1,1})}(k)^{\otimes m}$ descends to an ample line bundle $M$ on $Z$. We find
\[
 f^*(M)\cong ( L^{\otimes k}(-kD)\otimes E_{\mu_0\chi_\lambda})^{\otimes m}(-D')
\]
for another effective divisor $D'$. Here $E_{\mu_0\chi_\la}$ is the line bundle associated to $E$ by the character $\mu_0\chi_\la$. Thus $mkl+m\deg(E_{\mu_0\chi_\la})\ge 0$. Since  $M(E_\bullet,\tup{\alpha})=\deg(E_{\chi_\la})$ we find
\[
 M(E_\bullet,\tup{\alpha}) \le -\frac{kl}{\mu_0}\le kl \,.
\]

Since the set of states is finite and two instability one-parameter subgroups of $x$ are conjugate by \autoref{prop:Kempf}, (iii), there are only finitely many conjugacy classes of possible instability one-parameter subgroups. Hence, one can find  constants $C$ and $C_2$ with
\[
 C\ge \norm{\Lambda}^2 \,,\qquad  C_2\ge  \sum_{j=1}^{\len(E_\bullet)}\alpha_j(r-\rk(E_j))\,.
\]
The $(\delta_1,\delta_2)$-semistability now implies
\begin{align*}
  0 & \le
M(E_\bullet,\tup{\alpha})+\delta_1\mu_1(E_\bullet,\tup{\alpha},\phi)+\delta_2\mu_2(E_\bullet,\tup{
\alpha},s)\\
   &\le Cl  -\delta_1  +a_2\delta_2C_2\,. 
\end{align*}
Thus for $\delta_1> \Delta_1:=\max\{0,Cl +a_2\delta_2C_2\}$ Condition (i) must hold.
 \end{proof}
Before we can prove the converse statement, we need to establish boundedness of asymptotically $\delta_2$-semistable decorated swamps.
\begin{prop} \label{prop:asymptotically_stable=>bounded}
 There is a constant $C$ such that an asymptotically
 $\delta_2$-semistable decorated swamp $(E,L,\phi,s)$ 
 of type $(d,l)$ satisfies
 \[
  \mu_{\max}(E) \le \mu(E)+C\,.
 \]
\end{prop}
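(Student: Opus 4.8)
The plan is to argue by contradiction. I would produce a constant $C$, depending only on $r$, $d$, $l$ and the degrees $a_1,a_2$ of the homogeneous representations $\rho,\sigma$, and show that if $\mu_{\max}(E)-\mu(E)>C$ then one of the two conditions of \autoref{rem:asymptotically_stable} is violated. The case in which $E$ is semistable is trivial, so assume it is not and let $F\subseteq E$ be the maximal destabilising subsheaf, which is saturated and semistable with $\mu(F)=\mu_{\max}(E)$ and $0<\rk F<r$. I will use throughout that, as recalled in \autoref{sec:Preliminaries}, $\mu_1(E_\bullet,\tup{\alpha},\phi)$ is computed from the states of the point $[\phi_\eta]\in\PP((E_\rho)_\eta)$, so that condition (i) of \autoref{rem:asymptotically_stable} just says that $[\phi_\eta]$ is semistable for the natural $\SL((E_\rho)_\eta)$\nobreakdash-action.

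The heart of the matter is that a large $\mu_{\max}(E)$ forces $\phi$ to vanish on the ``heaviest'' part of $E_\rho$ attached to $F$. Writing $E_\rho$ as a direct sum of Schur functors $S_\mu(E)$ with $|\mu|=a_1$, the subbundle $\bigoplus_\mu S_\mu(F)\subseteq E_\rho$ is semistable of slope $a_1\mu_{\max}(E)$, because in characteristic zero a Schur functor of a semistable bundle is again semistable of the expected slope; hence it admits no non-zero homomorphism to the line bundle $L$ once $a_1\mu_{\max}(E)>l$, so $\phi$ vanishes on it. I would then iterate this observation: refining $F$ successively by the Harder--Narasimhan strata of the quotients $E/G_i$ dictated by which graded pieces of the $G_i$\nobreakdash-induced filtration of $E_\rho$ a non-zero $\phi$ can still reach (only those of slope at most $l$), and observing that each refinement raises the rank while, because $\phi$ targets $L$ of fixed degree $l$, it loses at most a bounded amount $C'=C'(r,l,a_1)$ of slope, one obtains after at most $r$ steps a saturated subsheaf $G$ with $F\subseteq G\subseteq E$, $\mu_{\min}(G)\ge\mu_{\max}(E)-rC'$, and such that either $G=E$, or the two-term flag $0\subset G\subset E$ with weight $1$ has $\mu_1=0$ (its value is $\ge0$ by condition (i) and $\le0$ by the construction).

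Given such a $G$ the conclusion is immediate. If $G=E$ then $\mu_{\max}(E)\le\mu_{\min}(E)+rC'\le\mu(E)+rC'$. If $G\subsetneq E$, apply condition (ii) of \autoref{rem:asymptotically_stable} to the flag $0\subset G\subset E$ with weight $1$: since its $\mu_1$ is $0$, we get $M+\delta_2\mu_2\ge0$ for this flag. Here $M=r\,\rk(G)\,(\mu(E)-\mu(G))$, and since $\sigma$ is homogeneous of degree $a_2$ the weights of $V_2$ along the one-parameter subgroup attached to this flag lie in $[-a_2(r-\rk G),\,a_2\rk G]$, so $\mu_2\le a_2 r$; hence $\mu(G)-\mu(E)\le\delta_2 a_2/\rk G\le\delta_2 a_2$, and therefore
\[
\mu_{\max}(E)\ \le\ \mu_{\min}(G)+rC'\ \le\ \mu(G)+rC'\ \le\ \mu(E)+\delta_2 a_2+rC',
\]
so that $C:=\delta_2 a_2+rC'$ does the job. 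The step I expect to be the genuine obstacle is the inductive construction of $G$ in the second paragraph --- the bookkeeping of exactly which graded pieces of the $G_i$\nobreakdash-filtration of $E_\rho$ can support a non-zero map to $L$, together with the uniform bound on the slope lost at each refinement. This is the kind of estimate carried out in the proof of Theorem 2.5.5.2 of \cite{Schmitt08}, whose ideas the present proof follows; in the setting of \autoref{sec:par_Higgs}, where $\rho$ is the conjugation representation and $\phi$ encodes a twisted endomorphism $\Phi$, the chain $G_i$ is simply the filtration generated by iterating $\Phi$, and the slope bookkeeping reduces to the elementary fact that the image of the induced map $\bar\Phi\colon G_i\to(E/G_i)\otimes L$ is a quotient of $G_i\otimes L$.
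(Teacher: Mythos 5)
Your overall skeleton is the right one --- and it matches the paper's: use condition (i) of \autoref{rem:asymptotically_stable} to manufacture, from a destabilizing subsheaf, a weighted flag with $\mu_1=0$ that still carries the destabilizing information, then feed that flag into condition (ii) to bound $\mu(G)-\mu(E)$ by $\delta_2$ times a weight bound for $\sigma$. Your final paragraph (the passage from such a $G$ to the slope bound) is fine. But the entire content of the proposition lives in your second paragraph, and there the argument has a genuine gap, which you yourself flag as ``the genuine obstacle.'' Two concrete problems. First, your starting point --- decomposing $E_\rho$ into Schur functors $S_\mu(E)$ with $|\mu|=a_1$ and asserting that the piece attached to $F$ is $\bigoplus_\mu S_\mu(F)$ of slope $a_1\mu_{\max}(E)$ --- only makes sense for \emph{polynomial} $\rho$. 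The representation this paper actually needs, $\rho=\End(\CC^r)^\vee\oplus\CC$ with the conjugation action, is homogeneous of degree $a_1=0$ and not polynomial; there the ``heaviest'' piece of the filtration induced on $E_\rho$ by $0\subset F\subset E$ is $\Hom(E/F,F)$, not a Schur functor of $F$, and its slope is $\mu(F)-\mu(E/F)$, not $a_1\mu_{\max}(E)$. Second, even for polynomial $\rho$, forcing $\mu_1=0$ for the flag $0\subset G\subset E$ requires $\phi$ to vanish on \emph{every} negative-weight step of the induced filtration of $E_\rho$, and the intermediate (mixed) graded pieces are built from tensor factors of both $G$ and $E/G$; their minimal slopes involve $\mu_{\min}(E/G)$, which is not a priori bounded below in terms of $\mu(E)$. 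Your claim that each refinement ``loses at most a bounded amount $C'$ of slope'' is exactly the estimate that needs proving, and no argument is given.

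The paper avoids this bookkeeping entirely by invoking GIT. Given a subbundle $F$, it forms the point of $\PP\bigl(\bigwedge^{r-\rk F}\EE\bigr)\times\PP(\EE_\rho)$ determined by $E\to E/F$ and $\phi_\eta$, linearized in $\mc{O}(1,n)$ for $n$ large. Condition (i) says the second coordinate is semistable, so \autoref{prop:instability_in_product} guarantees that if the pair is unstable its Kempf instability one-parameter subgroup $\la_0$ satisfies $\mu_1=0$ for the associated weighted flag --- this is precisely the flag $G$ you are trying to construct by hand. The quantitative slope control you are missing is then supplied by \autoref{prop:RamRam}: semistability of the limit point yields $\deg(E)-\deg(F)+nl+\mu_0 M(E_\bullet,\tup{\alpha})\ge 0$, which combined with condition (ii) and a uniform bound on $\mu_2$ over the finitely many conjugacy classes of instability subgroups gives $\deg(F)\le\deg(E)+nl-\mu_0\delta_2 C'$. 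If you want to keep your elementary route, you would at minimum have to (a) treat general homogeneous (mixed-tensor) $\rho$, and (b) prove the uniform per-step slope loss; as it stands the proof is incomplete at its central step.
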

\begin{proof}
 Let $F\subset E$ be a subbundle. The quotient $E\to E/F$ and $\phi$ determine a morphism
 \[
  f:X\to \opname{Gr}(E,r-\rk(F))\times_X \PP(E_\rho)\to \PP\left(\bigwedge^{r-\rk(F)}E\right)\times_X \PP(E_\rho)\,.
 \]
By \autoref{prop:instability_in_product} there is an $n(r')$ such that for $n\ge n(r')$ and a point $(x,y)\in \PP(\bigwedge^{r-r'}\EE)\times \PP(\EE_\rho)$ which is unstable with respect to $\mc{O}_{\PP(\bigwedge^{r-r'} \EE)\times \PP(\EE_\rho)}(1,n)$, but where $y$ is semistable, any instability one-parameter subgroup $\la_0$ for $(x,y)$ satisfies $\mu(\la_0,y)=0$. We now choose $n:=\max\{n(r')\,|\, 1\le r'\le r\}$.

If $f$ is generically stable we find
\[
 \deg(f^*\mc{O}_{\opname{Gr}(E,r-\rk(F))\times \PP(E_\rho)}(1,n))=\deg(\det(E/F)\otimes L(-D)^{\otimes
n}) \ge 0
\]
for some effective divisor $D$ on $X$. Thus, $\deg(F)\le \deg(E)+nl$.

If $f$ is not generically stable, there is an instability one-parameter subgroup $\la$ of $\SL(\EE)$ inducing a weighted flag $(E_\bullet,\tup{\alpha})$, such that
\[
 \deg(\det(E/F)\otimes L(-D)^{\otimes n}\otimes E_{\mu_0\chi_\la})\ge 0\,.
\]
From this follows
\[
 \deg(E)-\deg(F)+nl +  \mu_0 M(E_\bullet,\tup{\alpha}) \ge 0\,.
\]

By our choice of $n$ we have $\mu_1(E_\bullet,\tup{\alpha},\phi)=0$. Condition (ii) of \autoref{rem:asymptotically_stable} therefore gives
\[
 M(E_\bullet,\tup{\alpha})+\delta_2\mu_2(E_\bullet,\tup{\alpha},s)\ge 0\,.
\]
There is a constant $C'$ such that $C'\ge \mu_2(E_\bullet,\tup{\alpha},s)$ for all instability one-parameter subgroups. This implies
\[
 \deg(F)\le \deg(E)+nl -\mu_0\delta_2C'\,. 
\]
From this one easily deduces the claim.
\end{proof}
We can now prove the central result of this article.
\begin{thm}\label{thm:asymptotic_stability}
 For fixed $\delta_2\in\QQ_{>0}$ there is constant $\Delta\in \QQ_{>0}$ such that for all $\delta_1> \Delta$ a decorated swamp of type $(d,l)$ is $(\delta_1,\delta_2)$-\textup{(}semi\nobreakdash-\textup{)}stable if and only if it is asymptotically $\delta_2$-\textup{(}semi\nobreakdash-\textup{)}stable.
\end{thm}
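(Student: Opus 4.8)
The plan is to prove both implications, relying on \autoref{prop:stable=>asymptotically_stable} for one direction and establishing the converse using the boundedness from \autoref{prop:asymptotically_stable=>bounded}. Specifically, set $\Delta:=\max\{\Delta_1,\Delta_2\}$, where $\Delta_1$ is the constant of \autoref{prop:stable=>asymptotically_stable} and $\Delta_2$ is a constant to be determined below. For $\delta_1\ge\Delta_1$, \autoref{prop:stable=>asymptotically_stable} already gives that $(\delta_1,\delta_2)$-(semi)stability implies asymptotic $\delta_2$-(semi)stability, so the remaining task is the reverse implication: for $\delta_1$ large enough, an asymptotically $\delta_2$-(semi)stable decorated swamp $(E,L,\phi,s)$ of type $(d,l)$ is $(\delta_1,\delta_2)$-(semi)stable.

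For the converse, let $(E_\bullet,\tup{\alpha})$ be an arbitrary weighted flag of $E$; I must show
\[
 M(E_\bullet,\tup{\alpha})+\delta_1\mu_1(E_\bullet,\tup{\alpha},\phi)+\delta_2\mu_2(E_\bullet,\tup{\alpha},s)(\ge)0\,.
\]
By \autoref{rem:asymptotically_stable} we have $\mu_1(E_\bullet,\tup{\alpha},\phi)\ge 0$, and when it is strictly positive the $\delta_1$-term dominates provided we can bound $M(E_\bullet,\tup{\alpha})$ and $\mu_2(E_\bullet,\tup{\alpha},s)$ from below in terms of the $\alpha_j$ and $\rk(E_j)$, uniformly over all flags. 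Here is where boundedness enters: by \autoref{prop:asymptotically_stable=>bounded} there is a constant $C$ with $\mu_{\max}(E)\le\mu(E)+C$, so the class of underlying bundles $E$ is bounded, and therefore $|\deg(E_j)|$ is bounded by a linear expression in $\rk(E_j)$ with coefficients depending only on the fixed data $(d,l)$. This gives $M(E_\bullet,\tup{\alpha})\ge -\delta_1'\sum_j\alpha_j\rk(E_j)(r-\rk(E_j))$ for a suitable constant, and similarly the local term $\mu_2(E_\bullet,\tup{\alpha},s)$ is bounded below by $-a_2\sum_j\alpha_j(r-\rk(E_j))$, which is $\le 0$ and again controlled by $\sum_j\alpha_j\rk(E_j)(r-\rk(E_j))$. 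Meanwhile, because $\mu_1$ is always an integer after clearing the (bounded) denominators of the $\alpha_j$, whenever $\mu_1(E_\bullet,\tup{\alpha},\phi)>0$ it is bounded below by $\sum_j\alpha_j/z'$ for a fixed $z'$, hence also comparable to $\sum_j\alpha_j\rk(E_j)(r-\rk(E_j))$ from above; so for $\delta_1$ exceeding the ratio of these constants, the positive $\delta_1\mu_1$ term outweighs the (possibly negative) contributions of $M$ and $\delta_2\mu_2$, yielding strict positivity. In the remaining case $\mu_1(E_\bullet,\tup{\alpha},\phi)=0$, \autoref{rem:asymptotically_stable}(ii) directly gives $M(E_\bullet,\tup{\alpha})+\delta_2\mu_2(E_\bullet,\tup{\alpha},s)(\ge)0$, and since $\delta_1\mu_1=0$ the full inequality holds. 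Choosing $\Delta_2$ to be this ratio and $\Delta=\max\{\Delta_1,\Delta_2\}$ completes both directions.

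The main obstacle I expect is making the comparison in the case $\mu_1>0$ genuinely uniform over \emph{all} weighted flags, not merely over a finite family: one must check that the lower bounds for $M$ and $\mu_2$ and the lower bound $\sum_j\alpha_j/z'$ for a nonzero $\mu_1$ can all be phrased in terms of the single quantity $\sum_j\alpha_j\rk(E_j)(r-\rk(E_j))$ (or some fixed positive combination of the $\alpha_j$), so that the offending $\delta_1$ cancels against a fixed constant. For $M$ this is immediate from the definition once boundedness bounds $|\deg(E_j)|$ linearly in $\rk(E_j)$; for $\mu_2$ one uses that $\mu_2(E_\bullet,\tup\alpha,s)\ge -a_2\sum_j\alpha_j(r-\rk(E_j))$ from the description of $\sigma$ as a polynomial homogeneous representation of degree $a_2$; and for the lower bound on a positive $\mu_1$ one notes that after scaling each $\alpha_j$ to a common denominator the value $\mu_1(E_\bullet,\tup\alpha,\phi)$ is a nonnegative integer, so if it is not zero it is at least the reciprocal of that denominator times the smallest $\alpha_j$-contribution — and by homogeneity of the whole inequality in $\tup\alpha$ one may normalise, say, $\sum_j\alpha_j\rk(E_j)(r-\rk(E_j))=1$, reducing everything to a bound on a compact set of normalised weight vectors. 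Once this normalisation is set up the estimate is routine; the care lies in the bookkeeping of which constants depend only on $(d,l,r,\tup r,\tup\beta,\delta_2)$ and hence may legitimately enter the definition of $\Delta$.
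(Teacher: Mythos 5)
Your overall architecture matches the paper's: one direction is \autoref{prop:stable=>asymptotically_stable}, and for the converse you use \autoref{prop:asymptotically_stable=>bounded} to bound $M(E_\bullet,\tup\alpha)$ from below, bound $\mu_2$ from below via the degree $a_2$ of $\sigma$, dispatch the case $\mu_1=0$ by \autoref{rem:asymptotically_stable}(ii), and let the $\delta_1\mu_1$ term dominate when $\mu_1>0$. But the step you yourself flag as the main obstacle is a genuine gap, and your proposed fix does not close it. The claim that a strictly positive $\mu_1(E_\bullet,\tup\alpha,\phi)$ is bounded below by $\sum_j\alpha_j/z'$ for a \emph{fixed} $z'$ is false over the class of all weighted flags: the weights $\alpha_j$ are arbitrary positive rationals, so their denominators are not bounded, and $\mu_1$ is a maximum of finitely many linear forms $\sum_j c_j\alpha_j$ with integer coefficients of mixed sign. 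Normalising $\tup\alpha$ to a compact slice does not help: with $c=(1,-1)$ and $\tup\alpha=(\tfrac12+\epsilon,\tfrac12-\epsilon)$ the value is $2\epsilon$, so the infimum of $\mu_1$ over the locus $\{\mu_1>0\}$ intersected with the compact slice is $0$, and no single $\Delta_2$ makes $\delta_1\mu_1$ dominate the (bounded away from zero) negative contributions of $M$ and $\delta_2\mu_2$ uniformly.

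The paper closes exactly this gap with \autoref{lem:finitely_many_1PSGs}: the set of states decomposes the Weyl chamber into finitely many rational polyhedral cones on which the relevant functions are linear, so (semi-)stability only needs to be tested against the finite set $T$ of weighted flag types given by the primitive integral generators of the edges. Since $T$ is finite, there is a single $m$ with $mr\alpha_j\in\ZZ$ for all $(\tup r,\tup\alpha)\in T$, whence $\mu_1\ge 1/m$ whenever $\mu_1>0$, and the constants $M_0$ and $C_2$ are likewise maxima/minima over $T$. With that finite reduction in place, your estimates for $M$ and $\mu_2$ are essentially those of the paper and the argument goes through; without it, the case $\mu_1>0$ of your converse direction does not yield a single constant $\Delta$.
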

\begin{proof}
 Let $(E,L,\phi,s)$ be an asymptotically $\delta_2$-(semi-)stable decorated swamp. Note that by \autoref{lem:finitely_many_1PSGs} there is a finite set $T$ of types of weighted flags for which semistability has to be checked. We define
 \[
  C_2:=\min\left\{\sum_{i=1}^{\len(\tup{\alpha})} \alpha_i r_i \biggm| (\tup{r},\tup{\alpha})\in T \right\}\,.
 \]
 Further we let $m\in \NN$ be a number such that $mr\alpha_j$ is an integer for all $(\tup{r},\tup{\alpha})\in T$, $1\le j \le \len(\tup{\alpha})$.
 
 By \autoref{prop:asymptotically_stable=>bounded} there is a constant $C$ with $\mu_{\max}(E)\le \mu(E)+C$. We define
\[
 M_0:=\max\left\{\,\sum_{j=1}^{\len(\tup{\alpha})}\alpha_j\rk(E)r_j C \biggm| (\tup{r},\tup{\alpha})\in T\right\}\,
\]
and assume $\delta_1> -m(M_0+\delta_2a_2C_2)$.

Let now $(E_\bullet,\tup{\alpha})$ be a weighted flag of type $t\in T$. If $\mu_1(E_\bullet,\tup{\alpha},\phi)=0$ holds, then $(\delta_1,\delta_2)$-(semi\nobreakdash-)stability follows directly from Condition (ii) in \autoref{rem:asymptotically_stable}. Otherwise we have $\mu_1(E_\bullet,\tup{\alpha},\phi)\ge 1/m$, so that
\[
 M(E_\bullet,\tup{\alpha})+\delta_1\mu_1(E_\bullet,\tup{\alpha},\phi)+\delta_2\mu_2(E_\bullet,\tup{\alpha},s)\ge -M_0+\frac{\delta_1}{m} -\delta_2a_2C_2 >0\,.
\]
Together with \autoref{prop:stable=>asymptotically_stable} the claim follows for $\delta_1> \max\{\Delta_1,-m(M_0+a_2C_2\delta_2)\}$.
\end{proof}

\begin{prop}
 The class of vector bundles $E$ such that there exist a $\delta_1\in\QQ_{>0}$ and a $(\delta_1,\delta_2)$-\textup{(}semi\nobreakdash-\textup{)}stable decorated swamp $(E,L,\phi,s)$ of type $(d,l)$ is bounded.
\end{prop}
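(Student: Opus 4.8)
The plan is to deduce this boundedness statement from the work already done, rather than proving it from scratch. The key observation is that \autoref{thm:asymptotic_stability} exhibits a single constant $\Delta$ (depending only on $\delta_2$ and the type $(d,l)$) such that for every $\delta_1>\Delta$ the $(\delta_1,\delta_2)$-semistable decorated swamps are exactly the asymptotically $\delta_2$-semistable ones. So the class of underlying bundles $E$ splits naturally into two parts: those arising for some $\delta_1\le\Delta$, and those arising for some $\delta_1>\Delta$.

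For the second part, I would simply invoke \autoref{prop:asymptotically_stable=>bounded}: if $(E,L,\phi,s)$ is $(\delta_1,\delta_2)$-semistable for some $\delta_1>\Delta$, then it is asymptotically $\delta_2$-semistable, hence $\mu_{\max}(E)\le\mu(E)+C$ with $C$ independent of everything. By the standard characterization of bounded families of vector bundles of fixed degree and rank on $X$ (a family is bounded iff $\mu_{\max}$ is uniformly bounded above — this is the Grothendieck-type boundedness criterion already used implicitly in \cite{Beck2014DecSwamps}), this subclass is bounded.

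For the first part, the range $\delta_1\in(0,\Delta]$, I would argue as follows. One checks directly from \autoref{def:swamp_stability} that $(\delta_1,\delta_2)$-semistability becomes \emph{harder} to satisfy as $\delta_1$ decreases, in the sense relevant here: more precisely, the function $\mu_1(E_\bullet,\tup\alpha,\phi)$ is bounded, and the delicate point is only to get a uniform bound on $\mu_{\max}(E)$. Fix $\delta_1=\Delta$ (or any value in the range); if a bundle $E$ admits a $(\delta_1',\delta_2)$-semistable structure with $\delta_1'\le\Delta$, then in particular the semistability inequality $M(E_\bullet,\tup\alpha)+\delta_1'\mu_1(E_\bullet,\tup\alpha,\phi)+\delta_2\mu_2(E_\bullet,\tup\alpha,s)\ge 0$ holds for all weighted flags. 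Applying this to a destabilizing subbundle $F$ (with the flag $0\subset F\subset E$ and weight $1$), and bounding $\mu_1$ and $\mu_2$ from below by constants depending only on $r$ and the representations $\rho,\sigma$ (these terms are $-\langle\chi,\la\rangle$ for states $\chi$ in the finite sets $\St_T(\rho),\St_T(\sigma)$, hence uniformly bounded), one gets $\deg(F)\le\deg(E)+c$ for a constant $c$ depending on $\Delta,\delta_2$ and the fixed data but not on $E$ or $F$. This again yields a uniform upper bound on $\mu_{\max}(E)$, so this subclass is bounded too.

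Finally, a finite union of bounded families is bounded, which gives the claim. The main obstacle is the first part: one must make sure the lower bounds on $\mu_1$ and $\mu_2$ really are uniform over all weighted flags of $E$ simultaneously, which relies on the fact (used already in \autoref{thm:asymptotic_stability} via \autoref{lem:finitely_many_1PSGs}) that only finitely many \emph{types} of weighted flags need to be tested and that $\mu_1,\mu_2$ are, for each type, given by evaluating a fixed finite collection of characters. Once this reduction is in place, the estimate is routine. Alternatively, and perhaps more cleanly, one can avoid the case split entirely by noting that for $\delta_1\le\Delta$ the inequality at the fixed parameter $\Delta$ is implied up to a controlled error term $(\Delta-\delta_1')\mu_1\le \Delta\cdot(\text{const})$, reducing everything to the asymptotic case; I would present whichever of these is shorter.

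\begin{proof}
 Let $\Delta$ be the constant from \autoref{thm:asymptotic_stability}. Suppose $(E,L,\phi,s)$ is a $(\delta_1,\delta_2)$-(semi-)stable decorated swamp of type $(d,l)$ for some $\delta_1\in\QQ_{>0}$. If $\delta_1>\Delta$, then $(E,L,\phi,s)$ is asymptotically $\delta_2$-semistable by \autoref{thm:asymptotic_stability}, so \autoref{prop:asymptotically_stable=>bounded} gives $\mu_{\max}(E)\le\mu(E)+C$ with $C$ depending only on $\delta_2$ and $(d,l)$.

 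Now assume $\delta_1\le\Delta$. For every weighted flag $(E_\bullet,\tup{\alpha})$ we have $\mu_1(E_\bullet,\tup{\alpha},\phi)\ge -N_1$ and $\mu_2(E_\bullet,\tup{\alpha},s)\ge -N_2$ for constants $N_1,N_2$ depending only on $r$ and the representations $\rho,\sigma$: indeed these quantities equal $-\min\langle\chi,\la\rangle$ over the finite sets of states of $\rho$, resp.\ $\sigma$, normalized by the weights, and by \autoref{lem:finitely_many_1PSGs} only finitely many types of weighted flags occur. Given a subbundle $F\subset E$, apply $(\delta_1,\delta_2)$-semistability to the flag $0\subset F\subset E$ with weight $1$:
 \[
  0\le \bigl(\deg(E)\rk(F)-\deg(F)\rk(E)\bigr)+\delta_1\mu_1 +\delta_2\mu_2 \le \bigl(\deg(E)\rk(F)-\deg(F)\rk(E)\bigr)+\Delta N_1+\delta_2 N_2\,.
 \]
 Hence $\deg(F)\rk(E)\le\deg(E)\rk(F)+\Delta N_1+\delta_2 N_2$, so $\mu_{\max}(E)\le\mu(E)+(\Delta N_1+\delta_2 N_2)$.

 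In either case $\mu_{\max}(E)$ is bounded above by $\mu(E)$ plus a constant depending only on $\delta_2$, $(d,l)$ and the fixed representations. By the standard boundedness criterion for vector bundles of fixed rank and degree on the curve $X$ (compare Proposition 4.1 in \cite{Beck2014DecSwamps}), the class of such $E$ is bounded.
\end{proof}
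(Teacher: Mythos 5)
Your proof is correct and takes essentially the same route as the paper: split at the constant $\Delta$, handle $\delta_1>\Delta$ via asymptotic semistability together with \autoref{prop:asymptotically_stable=>bounded}, and handle $\delta_1\le\Delta$ by testing the flag $0\subset F\subset E$ with weight $1$ and uniform bounds on $\mu_1,\mu_2$. The only quibble is a direction slip: you state lower bounds $\mu_i\ge -N_i$ but the displayed estimate actually requires upper bounds $\delta_1\mu_1\le\Delta N_1$, $\delta_2\mu_2\le\delta_2 N_2$; these do hold (the paper uses $\mu_1\le a_1(r-1)$ and $\mu_2\le a_2(r-1)$ for this one-step flag), so the argument goes through as intended.
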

\begin{proof}
 If $(E,L,\phi,s)$ is $(\delta_1,\delta_2)$-semistable for $\delta_1>\Delta$, then by \autoref{prop:stable=>asymptotically_stable} and \autoref{prop:asymptotically_stable=>bounded} we have $\mu_{\max}(E)\le \mu(E)+C$.
 
 Now suppose $\delta_1\le \Delta$. Then the $(\delta_1,\delta_2)$-semistability with respect to the flag $0\subset F\subset E$ and the weight $\tup{\alpha}=(1)$ gives
 \begin{align*}
  0  \le \rk(E)\rk(F)(\mu(E)-\mu(F))+\Delta a_1(r-1)+\delta_2a_2(r-1)\,.
 \end{align*}
The maximal slope is therefore bounded by
\[
 \mu_{\max}(E)\le \mu(E)+(a_1\Delta+a_2\delta_2)\frac{r-1}{r}\,.
\]
By the usual arguments, the upper bound on the maximal slope implies boundedness.
\end{proof}

%%%%%%%%%%%%%%%%%%%%%%%%%%%%%%%%%%%%
\section{Stable Parabolic Higgs Bundles as Asymptotically Stable Decorated Swamps}
\label{sec:stable_PHB}
We come back to the setting of \autoref{sec:par_Higgs}. In particular, recall that $\beta_1,\ldots,\beta_k$ are positive rational numbers, $z$ is their least common denominator and $\delta_2=1/z$.
\begin{mydef}
 We call a parabolic Hitchin pair \emph{\textup{(}semi\nobreakdash-\textup{)}stable} if its associated decorated swamp is asymptotically $\delta_2$-(semi\nobreakdash-)stable.
\end{mydef}
As consequence of \autoref{thm:Existence_PH_moduli} and \autoref{thm:asymptotic_stability} one obtains:
\begin{cor}
 The \textup{(}projective\textup{)} moduli space of \sstable{} Hitchin pairs exists as a closed subscheme of the \textup{(}projective\textup{)} moduli space of asymptotically $\delta_2$-\sstable{} decorated swamps.
\end{cor}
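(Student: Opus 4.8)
The plan is to fix a single large value of the parameter $\delta_1$ and then transport the conclusion of \autoref{thm:Existence_PH_moduli} along the equivalence of stability notions provided by \autoref{thm:asymptotic_stability}. Concretely, I would fix $\delta_1>\Delta$, with $\Delta$ the constant of \autoref{thm:asymptotic_stability} for our $\delta_2=1/z$. For such $\delta_1$ that theorem says a decorated swamp of type $(d,l)$ is $(\delta_1,\delta_2)$-(semi\nobreakdash-)stable if and only if it is asymptotically $\delta_2$-(semi\nobreakdash-)stable; since this holds simultaneously for every $\delta_1>\Delta$, the open subscheme $\QSw^{(\delta_1,\delta_2)\textnormal{-(s)s}}\subset\QSw$ does not depend on the chosen $\delta_1$, which is what makes the phrase ``moduli space of asymptotically $\delta_2$-(semi\nobreakdash-)stable decorated swamps'' meaningful, and this common scheme is the one appearing in the corollary. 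Applying the same equivalence to the decorated swamp $\Psi(E,\phi,\epsilon,V_\bullet)$ associated with a parabolic Hitchin pair, and recalling that $\delta_1$-(semi\nobreakdash-)stability of the pair is by definition $(\delta_1,\delta_2)$-(semi\nobreakdash-)stability of the associated swamp, one gets that a parabolic Hitchin pair is (semi\nobreakdash-)stable in the sense of the definition preceding the corollary if and only if it is $\delta_1$-(semi\nobreakdash-)stable.

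Hence the moduli functor of (semi\nobreakdash-)stable parabolic Hitchin pairs coincides with that of $\delta_1$-(semi\nobreakdash-)stable parabolic Hitchin pairs, and on the decorated-swamp side the moduli functor of asymptotically $\delta_2$-(semi\nobreakdash-)stable objects coincides with that of $(\delta_1,\delta_2)$-(semi\nobreakdash-)stable ones; so the corresponding coarse moduli spaces are literally equal as schemes. The corollary then follows at once from \autoref{thm:Existence_PH_moduli}: the coarse moduli space of (semi\nobreakdash-)stable parabolic Hitchin pairs equals $\HP^{\delta_1\textnormal{-(s)s}}$, which exists and, by that theorem, sits as a closed subscheme of the (projective) coarse moduli space of $(\delta_1,\delta_2)$-(semi\nobreakdash-)stable decorated swamps; by the previous paragraph the latter is exactly the (projective) moduli space of asymptotically $\delta_2$-(semi\nobreakdash-)stable decorated swamps, and projectivity of this ambient space is part of the statement of \autoref{thm:Existence_PH_moduli}.

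I do not expect a genuine difficulty beyond book-keeping. The only points needing a word of care are that an equivalence of stability conditions for a \emph{fixed} $\delta_1>\Delta$ yields an honest identification of the relevant open subschemes of $\QSw$ (and of $\QHP$), so that the property of being a closed subscheme is preserved under it, and that \autoref{prop:asymptotically_stable=>bounded} together with \autoref{thm:asymptotic_stability} ensures the construction is non-vacuous, i.e. the moduli spaces in question are genuinely well-defined for the chosen parameter.
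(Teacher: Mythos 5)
Your proposal is correct and follows exactly the route the paper intends: the corollary is stated there as an immediate consequence of \autoref{thm:Existence_PH_moduli} and \autoref{thm:asymptotic_stability}, i.e.\ one fixes $\delta_1>\Delta$, uses the equivalence of $(\delta_1,\delta_2)$-(semi\nobreakdash-)stability with asymptotic $\delta_2$-(semi\nobreakdash-)stability to identify the two moduli problems, and then invokes the closed-immersion statement of \autoref{thm:Existence_PH_moduli}. Your additional remarks on the independence of the choice of $\delta_1$ are harmless book-keeping that the paper leaves implicit.
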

It remains to compare our notion of stability with the usual one.
\begin{lemma} \label{lem:mu_1_HP}
 Let $K$ be a field and $\la:K^*\to \SL(r)$ a one-parameter subgroup with associated weighted flag $(W_\bullet,\tup{\alpha})$ of length $m$. For a point $x=[\Phi,E]\in \PP(\End(K^r)^\vee\oplus K)$ one finds
 \begin{enumerate}
 \item $\mu(\la,x)<0 \quad \Longleftrightarrow\quad $ $E=0$ and $\Phi(W_i)\subset W_{i-1}$ for all $i=1,\ldots,m+1$.
  \item $\mu(\la,x)>0 \quad\Longleftrightarrow \quad W_\bullet$ is not $\Phi$-invariant.
 \end{enumerate}
\end{lemma}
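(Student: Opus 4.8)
The plan is to unwind the definition of $\mu(\la,x)$ in terms of states, as recalled in the Preliminaries, and to compute the relevant pairings explicitly for the representation $\rho$ on $\End(K^r)^\vee\oplus K$. First I would fix the weighted flag $(W_\bullet,\tup\alpha)$ and the induced one-parameter subgroup $\la$, and record the weight decomposition of $K^r=\bigoplus_i U_i$ into the $\la$-eigenspaces, so that $W_j=\bigoplus_{i\le j}U_i$. The key computation is to describe the induced $\la$-action on $\End(K^r)^\vee\oplus K$: the summand $K$ is fixed (weight $0$), while on $\End(K^r)=\bigoplus_{i,j}\Hom(U_i,U_j)$ the one-parameter subgroup acts with weight $\ga_j-\ga_i$ on the $(i,j)$-block, hence on the dual $\End(K^r)^\vee$ with weight $\ga_i-\ga_j$. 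From the formula $\mu_\rho(\la,x)=-\min\{\langle\chi,\la\rangle\mid\chi\in\St_T(\rho,x)\}$ one then reads off $\mu(\la,x)$ as the maximum over those blocks $(i,j)$ on which $\Phi$ (viewed as a functional, i.e.\ the $(i,j)$-component of $\Phi^\vee$, equivalently the component of $\Phi$ mapping $U_i\to U_j$) is nonzero, of the quantity $\ga_j-\ga_i$, together with the contribution $0$ coming from the $K$-summand when $E\neq 0$.

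With this description in hand, both equivalences become bookkeeping about the signs of $\ga_j-\ga_i$. Recall that the weights $\ga_1<\dots<\ga_{m+1}$ are strictly increasing and normalized (via $\SL$) so that $\sum$ of them weighted by dimensions is zero; the only facts I need are that $\ga_j-\ga_i>0$ for $j>i$, $=0$ for $j=i$, and $<0$ for $j<i$, and that the state $0$ from the $K$-summand is present precisely when $E\neq 0$. For (i): $\mu(\la,x)<0$ forces every present state to have strictly positive pairing with $\la$ negated, i.e.\ every block on which $\Phi$ is nonzero has $\ga_j-\ga_i<0$, which means $\Phi$ maps $U_i$ into $\bigoplus_{j<i}U_j\subset W_{i-1}$, i.e.\ $\Phi(W_i)\subset W_{i-1}$; and the $K$-state, which would contribute $0$ and spoil strict negativity, must be absent, so $E=0$. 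Conversely these two conditions make every state pair strictly positively (after negation), giving $\mu<0$. For (ii): $\mu(\la,x)>0$ means some present state has strictly negative pairing, i.e.\ some nonzero block of $\Phi$ has $\ga_j-\ga_i>0$, i.e.\ $j>i$; this says exactly $\Phi(U_i)\not\subset W_i$ for some $i$, which is the failure of $\Phi$-invariance of $W_\bullet$. The $K$-summand contributes only the value $0$, so it never affects strict positivity, and the equivalence holds regardless of $E$.

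The main obstacle, such as it is, is purely notational: keeping straight the distinction between $\Phi\in\End(K^r)$ and the point $[\Phi,E]$ living in $\PP(\End(K^r)^\vee\oplus K)$, and the resulting sign flip when passing to the dual representation, so that ``$\Phi$ lowers the flag'' corresponds correctly to the \emph{minimum} in the $\mu$-formula being negative. I would be careful to phrase everything in terms of the block components $\Phi_{ij}\colon U_i\to U_j$ of $\Phi$ relative to a splitting of the flag, since ``$\Phi(W_i)\subset W_{i-1}$ for all $i$'' is equivalent to ``$\Phi_{ij}=0$ whenever $j\ge i$'' and ``$W_\bullet$ is $\Phi$-invariant'' is equivalent to ``$\Phi_{ij}=0$ whenever $j>i$''; once this dictionary is set up both parts are immediate. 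One should also note that $\mu(\la,x)$ is in fact always defined (the point $[\Phi,E]$ is nonzero by hypothesis, so at least one state is present), and that the three cases $\mu<0$, $\mu=0$, $\mu>0$ are exhaustive, so (i) and (ii) together completely describe the sign of $\mu(\la,x)$.
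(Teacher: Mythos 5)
Your argument is correct and follows essentially the same route as the paper: compute the weights of $\la$ on the blocks of $\End(K^r)$ (the paper writes this as $\mu(\la,[\Phi])=\max\{\gamma_i-\gamma_j\mid \Phi(W_j)/(\Phi(W_j)\cap W_{i-1})\neq 0\}$ with the explicit increasing weights $\gamma_i$), note that the $K$-summand carries weight $0$ and contributes the state $0$ exactly when $E\neq 0$, and read off both equivalences from the strict monotonicity of the weights. Your block decomposition $\Hom(U_i,U_j)$ and the paper's flag-quotient formulation are the same computation in different notation, so nothing further is needed.
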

\begin{proof}
If $\Phi\neq 0$, the point $[\Phi]\in \PP(\End(K^r)^\vee)$ satisfies
\[
 \mu(\la,[\Phi])=\max \{\gamma_i-\gamma_j\,|\, \Phi(W_j)/(\Phi(W_j)\cap W_{i-1})\neq 0 \}
\]
with 
 \[
  \gamma_i:= \sum_{j=1}^m \alpha_j\dim(W_j)-\sum_{j=i}^m\alpha_jr\,,\qquad i=1,\ldots,m+1\,.
 \]
 Since $\gamma_i<\gamma_{i+1}$ for $i=1,\ldots,m$, we find $\mu(\la,[\Phi])> 0$ if and only if there is an index $j$ such that $\Phi(W_{j})\nsubseteq W_j$, and $\mu(\la,[\Phi])<0$ if and only if $\Phi(W_i)\subset W_{i-1}$ for all $i=1,\ldots,m+1$.
 
If $E\neq 0$, we have $\mu(\la,[E])=0$ for $[E]\in \PP(K)$. In general, this gives
\[
 \mu(\la,[\Phi,E])=\begin{cases}
                    \max\{\mu(\la,[\Phi]),0\}  & E\neq 0 \neq \Phi\,, \\
		     \mu(\la,[\Phi])  & E=0\,,\\
		      0   & \Phi=0\,,
                   \end{cases}
\]
which implies the claim.
\end{proof}

\begin{prop}
 A parabolic Hitchin pair $(E,\phi,\epsilon,V_\bullet)$ is \textup{(}semi\nobreakdash-\textup{)}stable if and only if the following conditions hold:
 \begin{enumerate}
  \item If $\epsilon=0$, then $\phi$ is not nilpotent,
  \item Every non-trivial $\phi$-invariant proper subbundle $F\subset E$ satisfies
   \[ 
    \frac{\opname{pardeg}_{\tup{\beta}}(F)}{\rk(F)}(\le) \frac{\opname{pardeg}_{\tup{\beta}}(E)}{\rk(E)} \,,
   \]
   where
   \[
 \opname{pardeg}_{\tup{\beta}}(F):= \deg(F) + \sum_{i=1}^k \beta_i \dim( F_{|\{x_0\}}\cap V_i)\,.
\]
 \end{enumerate}
\end{prop}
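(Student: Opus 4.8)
The plan is to unwind the definition of (semi\nobreakdash-)stability of a parabolic Hitchin pair. By construction $(E,\phi,\epsilon,V_\bullet)$ is (semi\nobreakdash-)stable exactly when its associated decorated swamp is asymptotically $\delta_2$-(semi\nobreakdash-)stable, which by \autoref{rem:asymptotically_stable} means that every weighted flag $(E_\bullet,\tup{\alpha})$ of $E$ satisfies $\mu_1(E_\bullet,\tup{\alpha},\phi)\ge 0$, and, whenever equality holds, also $M(E_\bullet,\tup{\alpha})+\delta_2\mu_2(E_\bullet,\tup{\alpha},s)(\ge) 0$. I will show that the first requirement is equivalent to Condition (1) and that, given Condition (1), the second is equivalent to Condition (2); since each side of the claimed equivalence fails as soon as Condition (1) fails, this suffices.

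First I would evaluate $\mu_1$. Restrict to the generic point $\eta$ of $X$, write $K$ for the function field and $\EE:=E_\eta$, and trivialise $L_\eta$ and $\mc{O}_X(l)_\eta$ so that $\iota_1,\iota_2$ become isomorphisms. Unravelling the construction of $\Psi$, the decoration $(\tilde\phi,\tilde\epsilon)$ recovers at $\eta$ the point $[\phi_\eta,\epsilon]\in\PP(\End(K^r)^\vee\oplus K)$, the functional $\tilde\phi_\eta$ on $\End(\EE)^\vee$ being, via the trace pairing, the endomorphism $\phi_\eta$. A weighted flag $(E_\bullet,\tup{\alpha})$ of $E$ restricts to a weighted flag of $\EE$, hence to a one-parameter subgroup $\la$ of $\SL(\EE)$, and $\mu_1(E_\bullet,\tup{\alpha},\phi)=\mu(\la,[\phi_\eta,\epsilon])$, so \autoref{lem:mu_1_HP} applies verbatim. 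By that lemma $\mu_1<0$ occurs only if $\epsilon=0$ and $\phi_\eta$ carries each step of $\EE_\bullet$ strictly into the previous one. Since every flag of $\EE$ extends uniquely to a flag of subbundles of $E$, and an endomorphism of $\EE$ is nilpotent precisely when it is strictly decreasing along some flag, the requirement \enquote{$\mu_1(E_\bullet,\tup{\alpha},\phi)\ge 0$ for all weighted flags} is equivalent to \enquote{$\epsilon\neq 0$ or $\phi$ is not nilpotent}, i.e.\ to Condition (1).

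Now assume Condition (1). Then $\mu_1\ge 0$ always, so by \autoref{lem:mu_1_HP} we have $\mu_1(E_\bullet,\tup{\alpha},\phi)=0$ if and only if $\EE_\bullet$ is $\phi_\eta$-invariant, equivalently $\phi(E_i)\subseteq E_i\otimes L$ for all $i$ — the composite $E_i\to E\otimes L\to(E/E_i)\otimes L$ vanishes generically and $(E/E_i)\otimes L$ is torsion free — i.e.\ the flag is $\phi$-invariant. It remains to show that \enquote{$M(E_\bullet,\tup{\alpha})+\delta_2\mu_2(E_\bullet,\tup{\alpha},s)(\ge)0$ for every $\phi$-invariant weighted flag} is Condition (2). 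For this I would unravel the Pl\"ucker and Segre embeddings through which $V_\bullet$ determines the point $[s]$; the same computation as for parabolic structures in \cite{Beck2014DecSwamps} and \cite{Schmitt08} then gives, for every weighted flag of $E$,
\[
 M(E_\bullet,\tup{\alpha})+\delta_2\mu_2(E_\bullet,\tup{\alpha},s)=\sum_{j=1}^{\len(E_\bullet)}\alpha_j\bigl(\opname{pardeg}_{\tup{\beta}}(E)\,\rk(E_j)-\opname{pardeg}_{\tup{\beta}}(E_j)\,\rk(E)\bigr)\,.
\]
As the weights $\alpha_j$ are positive, evaluating this on the one-step flags $0\subset F\subset E$ with $\tup{\alpha}=(1)$ — which are $\phi$-invariant exactly for $\phi$-invariant subbundles $F$ — shows that the displayed condition for all $\phi$-invariant weighted flags is equivalent to the parabolic slope inequality of Condition (2) for all non-trivial proper $\phi$-invariant subbundles, with the semistable and stable cases corresponding on both sides. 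Combining the two equivalences proves the proposition.

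The bulk of the work, and the step most prone to error, is the bookkeeping: transporting $[\phi_\eta,\epsilon]$ through the definition of $\Psi$ so that \autoref{lem:mu_1_HP} applies literally, and the (standard but notation-heavy) evaluation of $\mu_2$ in terms of the parabolic weights. The conceptual core, by contrast, is the elementary observation that non-nilpotency of $\phi$ is nothing but the absence of a flag of $E$ along which the generic fibre of $\phi$ is strictly decreasing.
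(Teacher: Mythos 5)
Your argument is correct and follows essentially the same route as the paper: translate asymptotic $\delta_2$-stability via \autoref{rem:asymptotically_stable}, use \autoref{lem:mu_1_HP} to identify the failure of $\mu_1\ge 0$ with nilpotency of $\phi$ when $\epsilon=0$ and to reduce the remaining check to $\phi$-invariant flags, and then invoke the standard parabolic-degree computation together with linearity in $\tup{\alpha}$ to pass to one-step flags. Your added details (saturating generic flags using torsion-freeness, and the explicit formula for $M+\delta_2\mu_2$) are correct elaborations of steps the paper leaves implicit.
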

\begin{proof}
Part (i) of \autoref{lem:mu_1_HP} and the definition of $\mu_1$ imply that Condition (i) of \autoref{rem:asymptotically_stable} is equivalent to Condition (i) in the proposition.

By Part (ii) of \autoref{lem:mu_1_HP}, Condition (ii) of \autoref{rem:asymptotically_stable} is satisfied for weighted flags which are not $\phi$-invariant. For a $\phi$-invariant weighted flag $(E_\bullet,\tup{\alpha})$ we need to check
\[
 M(E_\bullet,\tup{\alpha})+ \delta_2 \mu_2(E_\bullet,\tup{\alpha},s) (\ge) 0\,.                                                                                                                                                                 \]
This is the stability condition for parabolic vector bundles. It is linear in $\tup{\alpha}$ and can thus be checked for invariant subbundles, for which it yields the second condition of the proposition (see Section 7.1 in \cite{Beck2014DecSwamps}).
\end{proof}

\begin{rem}
\begin{enumerate}[wide]
\item Given a semistable parabolic Hitchin pair one can construct its Jordan--H\"older filtration. The unique representative of the S-equivalence class is the associated graded object of this filtration.
\item Our notion of stability of parabolic Hitchin pairs, induced by the asymptotic stability of decorated swamps, reproduces the usual stability condition for parabolic Higgs bundles as given in Definition 1.2 in \cite{konno1993} or Definition 1.3 in \cite{Yokogawa1993}.
\item The combination of our results with the techniques used to construct the moduli space of principal Higgs bundles in \S2.7.4 in \cite{Schmitt08} should lead to the moduli space of parabolic principal $G$-Higgs bundles with a reductive structure group $G$.
\end{enumerate}
\end{rem}

  \bibliographystyle{amsplain}
  \bibliography{bibliography}
\end{document}